\newtheorem {theorem}{Theorem}[section]
\newtheorem {proposition}[theorem]{Proposition}
\newtheorem {lemma}[theorem]{Lemma}
\newtheorem {corollary}[theorem]{Corollary}
\newtheorem {remark}[theorem]{Remark}
\theoremstyle{definition}
\newenvironment{thmrep}[1]
  {\innerthmrep}
  {\endinnerthmrep}
\newenvironment{correp}[1]
  {\innercorrep}
  {\endinnercorrep}
\def\ba{\begin{array}}
\def\ea{\end{array}}
\def\bea{\begin{eqnarray} \label}
\def\eea{\end{eqnarray}}
\def\be{\begin{equation} \label}
\def\ee{\end{equation}}
\def\bit{\begin{itemize}}
\def\eit{\end{itemize}}
\def\ben{\begin{enumerate}}
\def\een{\end{enumerate}}
\def\CC{\varmathbb{C}}
\def\EE{\varmathbb{E}}
\def\HH{\varmathbb{H}}
\def\NN{\varmathbb{N}}
\def\PP{\varmathbb{P}}
\def\QQ{\varmathbb{Q}}
\def\RR{\varmathbb{R}}
\def\l{\lambda}
\def\bW{\mathbf{W}}
\def\bZ{\mathbf{Z}}
\def\cF{\mathcal{F}}
\def\cH{\mathcal{H}}
\def\cL{\mathcal{L}}
\def\Po{\textup{Po}}
\newcommand{\set}[2]{\left\{ #1 : #2\right\}}
\newcommand{\tr}[1]{\mathrm{tr}\left( #1\right)}
\newcommand{\abs}[1]{\left| #1\right|}
\newcommand{\card}[1]{\left| #1\right|}
\newcommand{\len}[1]{\left|#1\right|} 
\def\sys{\mathrm{sys}}
\begin{document}

\title{\bfseries Poisson approximation of\\ the length spectrum of random surfaces}

\author{Bram Petri\footnotemark[1]\,\, and Christoph Th\"ale\footnotemark[2]}

\date{}
\renewcommand{\thefootnote}{\fnsymbol{footnote}}
\footnotetext[1]{Max Planck Institute for Mathematics, Bonn, Germany. Email: brampetri@mpim-bonn.mpg.de}

\footnotetext[2]{
Faculty of Mathematics at Ruhr University Bochum, Germany. Email: christoph.thaele@rub.de}

\maketitle

\begin{abstract}
\noindent Multivariate Poisson approximation of the length spectrum of random surfaces is studied by means of the Chen-Stein method. This approach delivers simple and explicit error bounds in Poisson limit theorems.  They are used to prove that Poisson approximation applies to curves of length up to order $o(\log\log g)$ with $g$ being the genus of the surface.
\bigskip
\\
{\bf Keywords}. {Chen-Stein method, hyperbolic geometry, hyperbolic surface, length spectrum, Poisson approximation, random surface, stochastic geometry}\\
{\bf MSC (2010)}. 57M50, 60C05, 60D05, 60F05
\end{abstract}

{\small\tableofcontents}

\section{Introduction}

Brooks and Makover \cite{BrooksMakover} introduced a notion of random hyperbolic surfaces in order to make statements about the geometry of a `typical' closed hyperbolic surface. In essence, their model consists of randomly gluing an even number of ideal hyperbolic triangles together along their sides and then adding points in the resulting cusps, we will provide more details on this construction in Section \ref{sec:model} below. 

Among the surfaces obtained are all branched covers of the Riemann sphere, branched over at most three points. A classical result of Bely\v{\i} \cite{Belyi} states that as algebraic curves over $\CC$, these are exactly the curves that are defined over $\overline{\QQ}$, the algebraic closure of $\QQ$. Because $\overline{\QQ}$ is dense in $\CC$, this implies that this model generates a dense set in every moduli space of compact Riemann surfaces, which justifies the use of the term `typical'. 

Many questions about the geometry of a hyperbolic surface can be answered by studying the lengths of closed geodesics on this surface. For instance, there exist finite sets of curves on a surface so that the hyperbolic metric on that surface is fixed once the lengths of these curves are known. Another example is Selberg's famous trace formula, which implies that if one knows the length spectrum of a hyperbolic surface, that is, the multiset of the lengths of all closed geodesics on that surface, one also knows the spectrum of its Laplacian. We refer the reader to the monograph of Buser \cite{Buser} for more background material on hyperbolic surfaces and their length spectra.

In this paper we study the bottom part of the length spectrum of a random surface that arises from the Brooks-Makover construction. Given a positive real number $\ell$, the number of closed geodesics of length $\ell$ defines a random variable $Z_{\ell,N}:\Omega_N\to\NN$, where $\Omega_N$ denotes our underlying probability space of random surfaces glued out of $2N$ ideal triangles, $N\in\NN$. In \cite{Petri}, the first author proved that given a finite number of such random variables (i.e., take a finite set of positive real numbers and count the geodesics of exactly these lengths), they converge in distribution to independent Poisson random variables, as $N\to\infty$, with explicit parameters. This was proved using the classical method of moments. With this method, a Poisson limit theorem is derived from the convergence of all moments to those of a Poisson random variable. 

Our goal in the present paper is to provide a transparent and more flexible approach to this result that at the same time allows to extend its range of applicability. The latter extension was one of the main sources of motivation for us. The technical device we use is the so-called Chen-Stein method for Poisson approximation. One of the main advantages of this technique is that it allows one to deduce explicit error bounds on the multivariate total variational distance $d_{mTV}$ between the random variables and their limits. They usually deliver neat conditions on the interaction of the parameters involved under which a Poisson limit theorem holds. It is this precise knowledge that implies that, as opposed to fixed finite parts of the length spectrum, we are now able to handle moderately growing parts of the length spectrum as well. 

In order to properly state our results, which will be the content of Section \ref{sec:result}, we need to introduce some more notation. The lengths of geodesics on the surface before compactification can be computed using the matrices
\[L=\left(\begin{array}{cc} 1 & 1 \\ 0 & 1 \end{array} \right) \;\;\text{and}\;\; R=\left(\begin{array}{cc} 1 & 0 \\ 1 & 1 \end{array} \right).\]
This procedure goes as follows. We trace the geodesic $\gamma$ and when it passes from one triangle to another we record an `$L$' or an `$R$' whenever it turns left or right, respectively. Interpreting the resulting word $w_\gamma$ as a product of matrices, the hyperbolic length of $\gamma$ is given by
\[\ell(\gamma) = 2\cosh^{-1}\left(\frac{\tr{w_\gamma}}{2}\right) . \]
Results by Brooks \cite{Brooks} and Brooks-Makover \cite{BrooksMakover} guarantee that the lengths of geodesics on the compactified surface can be compared to those of their pre-images on the surface before compactification. See Section \ref{sec:geom} below for more details. 

The upshot of the above is that to understand the length spectrum of a random surface, we need to understand how often given words in $L$ and $R$ appear as cycles in the triangulation of these surfaces. Note that we may consider words up to cyclic permutation of their letters and an involution that consists of reading the word backwards and interchanging $L$ and $R$.

\begin{thmrep}{\ref{thm:Main}} Let $N\in\NN$ and let $W$ be a set of words in $L$ and $R$. Furthermore, set $m_W$ equal to the maximal length of a word in $W$ and assume that $m_W\leq N$. Then
$$
d_{mTV}(\bZ_{W,N},\bZ_W) \leq 18\,|W|^3\,(6m_W)^{3m_W+4}\,\frac{1}{N}\,,
$$
where $\bZ_{W,N}$ is the vector valued variable that counts the number geodesics of the types in $W$ and $\bZ_W$ is a vector of independent Poisson random variables whose means are determined by the words in $W$ (see Section \ref{sec:result} for details).
\end{thmrep}

Of course, this implies a Poisson limit theorem when the error is sufficiently small.

\begin{correp}{\ref{cor1}}
Suppose that $(W_N)_{N\in\NN}$ is a sequence of sets of combinatorial types of geodesics that satisfies
$$
\lim_{N\to\infty}\card{W_N}^3\,(6m_{W_N})^{3m_{W_N}+4}\,\frac{1}{N}=0\,.
$$
Then one has the Poisson limit theorem
$$
\lim_{N\to\infty}d_{mTV}\big(\bZ_{{W_N},N},\bZ_{W_N}\big)=0\,.
$$
\end{correp}

As a concrete example of a more geometric consequence we present the following result. Here and below, we write $a_N=o(b_N)$ for two sequences $(a_N)_{N\in\NN}$ and $(b_N)_{N\in\NN}$ provided that $a_N/b_N\to 0$ as $N\to\infty$.

\begin{correp}{\ref{cor2}}
The Poisson limit theorem holds for all curves on the surface with hyperbolic length up to $o(\log\log N)$.
\end{correp}

From the fact that the genus $g$ of a random surface is linearly bounded by $N$, we obtain:

\begin{correp}{\ref{cor3}}
The Poisson limit theorem holds for all curves on the surface with hyperbolic length up to $o(\log\log g)$.
\end{correp}

There is a parallel between the type of questions we ask here and questions that arise in the context of random regular graphs, that is, random graphs of which all the vertices have the same degree. In fact, the model we use for random surface naturally corresponds to the so-called configuration model for random $3$-regular (trivalent) graphs. Closed curves then correspond to cycles on graphs. In that context, the fact that the finite cycle counts are asymptotically Poisson distributed is a classical result due to Bollob\'as \cite{Bollobas}. Much more recently, McKay, Wormald and Wysocka  \cite{McKayWormaldWysocka} proved, using different methods than ours, that Poisson approximation holds for curves with a number of edges that is roughly logarithmic in the number of vertices. At first sight this seems a much larger range, but in fact it is only the translation from `combinatorial length' (the number of triangles that a geodesic goes through) to hyperbolic length that makes the extra logarithm appear in our setting. McKay, Wormald and Wysocka also use their result to give a probabilistic proof of the existence of regular graphs with large girth. Here, the girth of a graph is the length of the shortest cycle. Their proof provides graphs with girth that grows like a logarithm of the number of vertices. Up to a multiplicative constant, this is the fastest growth one can hope for. It is worth noting that there are also deterministic constructions for sequences of regular graphs with such girth, see for instance the work of Erd\H{o}s and Sachs \cite{ErdosSachs} and of Lubotzky, Phillips and Sarnak \cite{LubotzkyPhillipsSarnak}. In our set-up we can use our result to prove the existence of surfaces with moderately growing sytoles. The latter is the length of the shortest curve on a surface and is the analogue for surfaces of the girth of a graph. However, again because of the translation between hyperbolic and combinatorial length, we obtain surfaces with systoles that grow much more modestly than logarithmically in the genus. Because of the existence of sequences of hyperbolic surfaces with systoles that do in fact grow like a logarithm of the genus (see the work of Buser and Sarnak \cite{BuserSarnak}, Katz, Schaps and Vishne \cite{KatzSchapsVishne}, and Petri and Walker \cite{PetriWalker}), we will not pursue this direction any further in the current text.

It is worth mentioning that there are also other models of random surfaces which have been studied in the literature. For instance, Mirzakhani \cite{Mirzakhani} investigated a model for random surfaces related to Weil-Petersson geometry. Moreover, Guth, Parlier and Young \cite{GuthParlierYoung} were able to apply the probabilistic method to prove an existence result. They did this both in the case of hyperbolic surfaces, using the model coming from the Weil-Petersson metric, and surfaces that arise by randomly gluing together a finite number of equilateral Euclidean triangles.

The Chen-Stein method for Poisson approximation, the main tool in the proof of Theorem \ref{thm:Main}, has its roots in the work of Chen \cite{Chen}. It follows a general methodology introduced a few years earlier by Stein in the context of normal approximation, see \cite{Stein}. The method has become popular especially after the appearance of the influential paper of Arratia, Goldstein and Gordon \cite{ArratiaGoldsteinGordon} and the monograph of Barbour, Holst and Janson \cite{BarbourHolstJanson}, showing in particular that `two moments suffice for Poisson approximation'. More precisely, the main results in this context say that to show convergence in distribution of a sequence of random variables towards a Poisson random variable, one only has to control the behaviour of the first two moments. It should be emphasized that this constitutes a drastic simplification of the classical method of moments, for which one has to show that \textit{all} moments converge. Even more strikingly, the Chen-Stein method usually also has the advantage of providing a simple upper bound on the speed of convergence in the Poisson limit theorem that, as anticipated above, allows to deduce precise conditions on the interaction of the parameters involved under which such a statement applies. Having at hand such a quantitative result also to approximate individual probabilities effectively. The latter point was another source of motivation for us to apply the Chen-Stein method in the context of random surfaces.

Since the subject of this paper lies between hyperbolic geometry and probability and we would like to address readers from both communities, we have tried to write the text with both backgrounds in mind. For that reason, we have included some background material on geometry in Section \ref{sec:geom} and on probability theory in Section \ref{sec:prob}. Our main results are formally presented in Section \ref{sec:result}, which also contains their proofs.

\section{Geometric background: Random surfaces}\label{sec:geom}

The main goal of this paper will be to apply the Chen-Stein method, presented in the next section, in the context of random surfaces. The model for random surfaces that we will study in this text is the combinatorial model for `typical' hyperbolic surfaces introduced by Brooks and Makover in \cite{BrooksMakover}. The idea of this model is as follows. We start with an even number of ideal hyperbolic triangles and pair up the sides of these triangles randomly. Given such a pairing, we will create a hyperbolic surface by gluing the sides according to the pairing. In what follows we shall describe the details and the background of this construction.

\subsection{Hyperbolic geometry}

Before we continue to properly describe the probability space we first recall some basic facts about hyperbolic geometry and specifically the geometry of ideal hyperbolic triangles, for a more complete exposition on the geometry of hyperbolic surfaces we refer the reader to the monographs of Beardon \cite{Beardon} and Buser \cite{Buser}.

The model for the hyperbolic plane that we will use is the upper halfplane model. This means that the hyperbolic plane is the real $2$-manifold
\begin{equation*}
\HH^2 := \set{z\in\CC}{\Im(z)>0}\,,
\end{equation*}
with $\Im(z)$ standing for the imaginary part of $z\in\CC$, which is endowed with a Riemannian metric derived from the differential
\begin{equation*}
ds=\frac{|dz|}{\Im(z)}\,.
\end{equation*}
This is a metric of constant curvature $-1$ and the isometries of this metric are exactly the biholomorphisms of $\HH^2$ as a domain in $\CC$. It induces a distance function that we denote by $d_{\HH^2}(\,\cdot\,,\,\cdot\,)$ and is given by
$$
d_{\HH^2}(z,z')={\rm cosh}^{-1}\bigg(1+\frac{1}{2}\frac{|z-z'|}{\Im(z)\Im(z')}\bigg)
$$
for all $z,z'\in \HH^2$. The boundary of $\HH^2$ in this model is obtained by compactifying the real line with one point. As a set we will write $\partial\HH^2=\RR\cup\{\infty\}$.

The geodesics of $\HH^2$ are vertical lines and half circles orthogonal to the real line. This means that any geodesic naturally runs between two points of $\partial\HH^2$ and is uniquely defined by these two points (a vertical line is a geodesic between a point on the real line and the point at infinity).

\begin{figure}[t]
\begin{center} 
\includegraphics[scale=1.2]{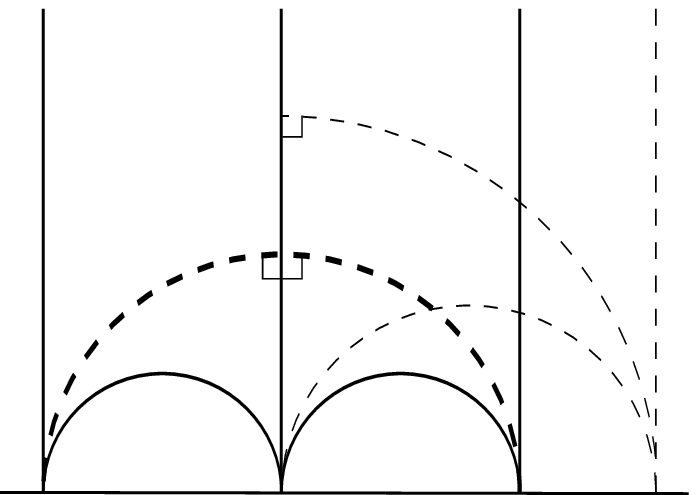} 
\caption{Shear.}
\label{fig:shear}
\end{center}
\end{figure}

There are many equivalent ways to define what a hyperbolic surface is. Perhaps the most natural definition is that a hyperbolic surface is a $2$-manifold that is locally modelled over the hyperbolic plane. It has charts with images in $\HH^2$ and transition maps that are restrictions of hyperbolic isometries.  The fact that the isometries of $\HH^2$ are biholomorphisms implies that a hyperbolic surface automatically comes with a complex structure (sometimes also called conformal or Riemann surface structure). The Poincar\'e-Koebe Uniforization Theorem states that this identification between hyperbolic and complex structures is one to one. Namely, if $S$ is a surface with negative Euler characteristic that is endowed with a complex structure, then there exists a unique hyperbolic structure on $S$ that is biholomorphically equivalent to the given complex structure.

A nice way of constructing hyperbolic surfaces, and this is the only way that we will construct them in this article, is by gluing together pieces of $\HH^2$. More concretely, we will take some finite and even number of triangles in $\HH^2$ and identify pieces of the boundaries of these triangles (their sides) by isometries. 

A triangle in $\HH^2$ is the convex hull of three points (the vertices of the triangle) in $\partial\HH^2\cup\HH^2$. The vertices that lie on $\partial\HH^2$ we will generally take out of the triangle, because the hyperbolic metric is not defined at such points. Such a triangle is uniquely defined by the angles at its vertices. That is, if two triangles have the same angles then we can find an isometry of $\HH^2$ that maps one to the other. We will be particularly interested in ideal triangles. These are triangles with all their vertices on $\partial\HH^2$. As such, they necessarily have sides of infinite length and all angles equal to $0$, which also means that there is only one ideal triangle up to isometry.

Given two triangles we can glue them together by identifying a pair of sides of the two triangles by an isometry. This means that if we want to do this, the sides need to have the same length. If this side length is finite then there is only one isometry between the two sides. However, we want to glue sides of ideal triangles together. Between two sides of ideal triangles there is an infinite number of isometries, which is illustrated by Figure \ref{fig:shear}. It shows gluings of an ideal triangle (the one on the left) to two `different' ideal triangles, the one with the full lines and the one with the dashed lines. As we noted before, these two triangles are isometric. This does however not imply that these two gluings are isometric. In fact, they are not and this can be seen from the following procedure. Given such a gluing of a pair of sides, we drop two perpendicular geodesics to the glued pair of sides from the vertices opposite to this side. Figure \ref{fig:shear} shows these geodesics. As can be seen in the figure, the first pair of perpendiculars meet in the sides that are glued together and the second pair does not. This immediately shows that the two gluings are not isometric and it also gives us a way to parametrize the gluings. We can measure the signed distance between the two points where the perpendiculars meet the side. This signed distance will be called the shear, which determines the gluing up to orientation. In this article we will always choose a gluing that realizes shear $0$. 

Given a gluing of ideal triangles that pairs up every side with another, we obtain a hyperbolic surface with punctures (called cusps, which come from the lack of vertices in ideal triangles) and no boundary. Figure \ref{fig:surface} shows an example. 

\begin{figure}[t]
\begin{center} 
\includegraphics[width=0.9\columnwidth]{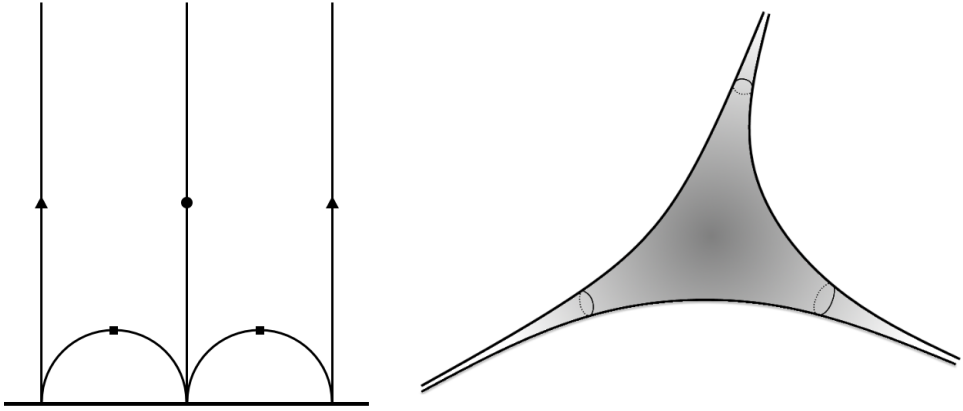} 
\caption{Example of a surface that arises from the two ideal hyperbolic triangles on the left by gluing together the sides marked with the same symbol (the common side marked by a circle is already identified). The resulting surface on the right has $3$ cusps and is homeomorphic to a $2$-sphere with three points removed.}
\label{fig:surface}
\end{center}
\end{figure}

In principle this surface could be unorientable. We will however only consider gluings such that the resulting surface is orientable. For every cusp, there exists a $t>0$ such that the cusp has a neighborhood isometric to the following open hyperbolic manifold:
\begin{equation*}
C_t := \set{z\in\HH^2}{\Im(z)>t}\Big/\sim\,,
\end{equation*}
where $\sim$ stands for the quotient by the hyperbolic isometry $z\mapsto z+1$. A horocycle around a cusp is the projection of a horizontal line in $C_t$.

The fact that the cupsed surface constructed above also comes with a complex structure gives us a way to turn it into a compact surface. The cusps of such a surface have neighborhoods that are biholomorphic to punctured disks in $\CC$. As such, the surface can be compactified by adding these points and by extending the complex structure. If the genus of the surface is at least $2$ (which implies negative Euler characteristic) then the Uniformization Theorem gives us a new hyperbolic structure. About the hyperbolic surfaces obtained through this procedure we have the following classical theorem due to Bely\v{\i}.

\begin{theorem}\label{thm:belyi}(\cite{Belyi}) The set of all hyperbolic surfaces obtained from the procedure described above is dense in any moduli space of closed hyperbolic surfaces.
\end{theorem}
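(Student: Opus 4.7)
The plan is to identify the surfaces produced by the Brooks--Makover gluing with precisely the compact Riemann surfaces admitting a \emph{Bely\u{\i} map}, and then to deduce density in moduli space from Bely\u{\i}'s theorem combined with the density of $\overline{\QQ}$ in $\CC$.

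First, I would make the correspondence between the combinatorial gluing data and holomorphic maps explicit, in the spirit of Grothendieck's dessins d'enfants. A shear-zero gluing of $2N$ ideal triangles, compactified as described in the preceding subsection, equips the resulting compact Riemann surface $S$ with a distinguished triangulation by $2N$ triangles. From this data one constructs a holomorphic map $f\colon S\to\mathbb{P}^1(\CC)$ by sending each vertex to $0$, each edge bijectively onto the segment $[0,1]\subset\mathbb{P}^1(\RR)$, each edge midpoint to $1$, and each triangle interior biholomorphically onto one of the two components of $\mathbb{P}^1(\CC)\setminus[0,1]$ with the centre mapping to $\infty$; such an $f$ is ramified only over $\{0,1,\infty\}$. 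Conversely, any Bely\u{\i} map on a compact Riemann surface yields a triangulation via the preimage of $[0,1]$, and each triangular region carries a unique conformal ideal hyperbolic structure inherited from the triple-punctured sphere, which recovers a shear-zero gluing. Thus the surfaces produced by the construction are exactly the Bely\u{\i} surfaces.

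Next I would invoke Bely\u{\i}'s theorem \cite{Belyi}, which identifies the class of compact Riemann surfaces admitting a Bely\u{\i} map with the class of smooth projective curves that can be defined by polynomial equations with coefficients in $\overline{\QQ}$. The direction we need---producing a Bely\u{\i} map from an arbitrary $\overline{\QQ}$-curve by iteratively reducing the ramification locus---is the substantive arithmetic-geometric content, and I would import it as a black box.

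Finally, to conclude density in a moduli space $\mathcal{M}_g$ of closed hyperbolic surfaces of genus $g\geq 2$, I would fix $[S_0]\in\mathcal{M}_g$, pick a projective model of $S_0$, and approximate its defining coefficients in $\CC$ by elements of $\overline{\QQ}$. This produces a sequence of smooth projective curves $S_n$ defined over $\overline{\QQ}$ whose classes $[S_n]$ converge to $[S_0]$, using the openness of the smooth locus in coefficient space together with the continuity of the classifying map to $\mathcal{M}_g$. Each $S_n$ is a Bely\u{\i} surface by the previous step, hence arises from a gluing. The main obstacle in this route is not the approximation argument but the invocation of Bely\u{\i}'s theorem itself, whose proof requires a non-trivial algebraic construction of the Bely\u{\i} map and lies well outside the techniques developed in the rest of the paper.
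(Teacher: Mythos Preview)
Your proposal is correct and matches the paper's own treatment. The paper does not give a proof of this theorem at all: it is stated with attribution to Bely\u{\i} and used as a black box, with only the two-line sketch in the introduction (the surfaces are exactly the curves defined over $\overline{\QQ}$, and $\overline{\QQ}$ is dense in $\CC$), which is precisely the route you outline.
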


\subsection{The model of Brooks and Makover} \label{sec:model}

We are now ready to describe the probability space of random surfaces we consider. We want to randomly glue together an even number ($2N$ with $N\in\NN$) of ideal triangles into a surface without boundary. As a set, our probability space is
\begin{equation*}
\Omega_N := \left\{\text{partitions of }\{1,\ldots,6N\}\text{ into pairs} \right\}
\end{equation*}
and since $\Omega_N$ is a finite set, a probability measure $\PP_N$ is obtained by normalizing the counting measure on $\Omega_N$. We shall write $\EE_N$ for the expectation with respect to $\PP_N$.

Given $\omega\in\Omega_N$ we obtain two surfaces: a surface with cusps and a compact surface. The surface with cusps is obtained by taking $2N$ ideal triangles and labelling their sides by the numbers $1,\ldots,6N$ in such a way that the numbers $3i-2,3i-1,3i$ form the sides of one triangle for all $i=1,\ldots, 2N$. We now glue these triangles together according to the pairings in $\omega$. In order to completely define the gluing we need to say in which way the side gluings are oriented. We will do this by gluing the triangles in such a way that the cyclic order of the labels on the sides defines an outward orientation on the resulting surface. The resulting cusped surface will be denoted $S_O(\omega)$. The compact surface corresponding to $\omega$ is obtained by compactifying $S_O(\omega)$ by the procedure described above and will be denoted $S_C(\omega)$.

Bely\v{\i}'s theorem (Theorem \ref{thm:belyi} above) implies that the set of surfaces $S_C(\omega)$ for all $\omega\in\bigcup_{N\in\NN}\Omega_N$ forms a dense set of surfaces in any moduli space of closed hyperbolic surfaces. Because of this, the model we described above gives us a way to say something about a `typical' compact hyperbolic surface, which was the motivation for Brooks and Makover to introduce this model in \cite{BrooksMakover}. The corresponding statement for the surfaces $S_O(\omega)$ is false. This can easily be seen from the facts that every pair of positive integers $(g,n)$ with $2g+n-2>0$ defines a $(6g+3n-6)$-dimensional moduli space of hyperbolic surfaces with genus $g$ and $n$ cusps and that we only obtain finitely many surfaces with genus $g$ and $n$ cusps of the form $S_O(\omega)$.

For this reason we are mainly interested in results on the geometry of the compact surfaces. Unfortunately, while the combinatorics of the gluing, encoded by $\omega$, give us the geometry of the surface $S_O(\omega)$, the dependence is not so clear for $S_C(\omega)$. The surface $S_C(\omega)$ is obtained by applying the Uniformization Theorem, which is an existence statement and hence does not tell us much about the geometry of the resulting surface. This situation can be remedied by applying the following comparison statement, proved by Brooks.

\begin{lemma}\label{lem:brooks}(\cite{Brooks}) For $L\in (0,\infty)$ sufficiently large, there is a constant $\delta(L)$ with the following property: Let $\omega\in\Omega_N$ such that the horocycles of length $L$ around the cusps of $S_O(\omega)$ are disjointly embedded. Then for every geodesic $\gamma$ in $S_C(\omega)$ there is a geodesic $\gamma'$ in $S_O(\omega)$ such that the image of $\gamma'$ is homotopic to $\gamma$, and
$$
\ell(\gamma) \leq \ell(\gamma') \leq (1+\delta(L))\ell(\gamma)\,,
$$
where $\ell(\gamma)$ and $\ell(\gamma')$ denotes the length of $\gamma$ and $\gamma'$, respectively. Furthermore, $\delta(L)\rightarrow 0$, as $L\rightarrow\infty$.
\end{lemma}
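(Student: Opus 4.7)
The plan is to compare the two hyperbolic metrics living on the same underlying Riemann surface. The complex structure on $S_O(\omega)$ extends across the cusps to a complex structure on the closed surface $S_C(\omega)$, and for genus at least $2$ the Uniformization Theorem equips $S_C(\omega)$ with a unique hyperbolic metric, which I will denote $\rho_C$. On the punctured surface $S_C(\omega)\setminus\{p_1,\ldots,p_n\}=S_O(\omega)$ (the $p_i$ being the cusp points) one therefore has two hyperbolic metrics: the metric $\rho_O$ coming from the ideal-triangle gluing and the metric $\rho_C$ restricted from the compactification. The entire lemma is a quantitative comparison of these two metrics.

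First I would settle the lower bound $\ell(\gamma)\leq\ell(\gamma')$ by a one-line Schwarz--Pick argument. The inclusion $S_O(\omega)\hookrightarrow S_C(\omega)$ is holomorphic and therefore distance-non-increasing for the hyperbolic metrics, so $\rho_C\leq\rho_O$ pointwise on $S_O(\omega)$. Given a closed $\rho_C$-geodesic $\gamma$ on $S_C(\omega)$, which after an arbitrarily small perturbation may be taken to avoid the finitely many cusp points, and writing $\gamma'$ for the $\rho_O$-geodesic in the corresponding free homotopy class on $S_O(\omega)$, the chain
$$
\ell(\gamma)\;\leq\;\ell_{\rho_C}(\gamma')\;\leq\;\ell_{\rho_O}(\gamma')\;=\;\ell(\gamma')
$$
is immediate: the first inequality holds because $\gamma$ minimises $\rho_C$-length in its $S_C$-homotopy class, which contains $\gamma'$ via the inclusion, and the second is the pointwise metric inequality just noted.

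The upper bound is the substantive part. The aim is to show that $\rho_O\leq(1+\delta(L))\rho_C$ pointwise on a region of $S_O(\omega)$ that contains every closed $\rho_C$-geodesic, with $\delta(L)\to 0$ as $L\to\infty$. Concretely, in each cusp neighbourhood $C_t$ of $S_O(\omega)$ the metric $\rho_O$ has the explicit form $|dz|/\Im(z)$, while near the filled-in puncture $p_i$ in $S_C(\omega)$ the metric $\rho_C$ admits a local expansion about $p_i$ in a uniformising coordinate. Matching these expansions on the horocycle of length $L$ and propagating the estimate outward yields the desired pointwise comparison with ratio tending to $1$ as $L\to\infty$. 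Because the $L$-horocycles are assumed to be disjointly embedded, a single $\delta(L)$ works uniformly across all cusps of $S_O(\omega)$. With the comparison in hand, I would use $\gamma$ itself as a test curve in $S_O(\omega)$: a closed $\rho_C$-geodesic cannot descend arbitrarily deep into a cusp region, since a small disc neighbourhood of $p_i$ in $S_C(\omega)$ is simply connected and contains no essential closed curve, and the collar lemma quantifies how shallow the intrusion must be. Consequently $\ell_{\rho_O}(\gamma)\leq(1+\delta(L))\,\ell(\gamma)$, and the $\rho_O$-minimiser $\gamma'$ in its homotopy class satisfies $\ell(\gamma')\leq\ell_{\rho_O}(\gamma)\leq(1+\delta(L))\,\ell(\gamma)$.

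The main obstacle, which is precisely the analytic content of \cite{Brooks}, is establishing the explicit asymptotic form of $\rho_C$ near a cusp point and, most importantly, controlling the approach rate in terms of $L$ alone, independently of the surface $S_C(\omega)$. Verifying that closed $\rho_C$-geodesics are confined to the region where this comparison is effective is comparatively soft, combining the topological observation above with the collar lemma, but it must be dovetailed with the metric comparison in order to exclude portions of $\gamma$ on which $\rho_O/\rho_C$ would be uncontrolled. For this reason my concrete plan would be to invoke Brooks's comparison theorem directly for the cusp-region asymptotics and to supply the lower bound and the geometric cusp-avoidance argument by hand, as sketched above.
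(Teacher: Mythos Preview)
The paper does not prove this lemma at all: it is stated with the attribution \cite{Brooks} and is used as a black box, the authors noting only that it ``is actually a consequence of a much more general theorem by Brooks.'' There is therefore no in-paper proof to compare your proposal against.

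That said, your sketch is a reasonable outline of how such a comparison result is established. The Schwarz--Pick argument for the lower bound is correct and standard. For the upper bound you correctly identify that the substantive analytic input is Brooks's control of the ratio $\rho_O/\rho_C$ in terms of the cusp parameter alone, uniformly in the surface; you are right that this is not something one can supply by soft arguments, and you appropriately defer to \cite{Brooks} for it. One point to be slightly more careful about: the closed $\rho_C$-geodesic $\gamma$ need not avoid the cusp regions entirely, so the pointwise comparison must hold on the full complement of some definite horoball neighbourhoods (not merely ``outside the cusps''), and the depth to which $\gamma$ can penetrate those neighbourhoods must be bounded in terms of $L$; your appeal to the collar lemma addresses this, but the dovetailing you mention is genuinely necessary and is part of what Brooks's theorem packages.
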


The statement above is actually a consequence of a much more general theorem by Brooks. Because we are mainly interested in curves, the lemma above is enough. The constant $L$ in the lemma is a measure of how far apart the cusps are. If $S_O(\omega)$ satisfies the condition of the lemma, we will say it has cusp length $\geq L$.

Lemma \ref{lem:brooks} implies that if we want to understand the lengths of curves on $S_C(\omega)$, we need to understand the lengths of curves on $S_O(\omega)$ and then prove that the cusps on $S_O(\omega)$ are far apart.  This last part was done by Brooks and Makover in \cite{BrooksMakover}. We will need a mild sharpening of their result, due to the first author.

\begin{proposition}\label{prp:cusplength}(\cite{Petri}) Let $L\in (0,\infty)$. We have
$$
\PP_N[S_O \text{ has cusp length}\ \geq\ L] = 1-\mathcal{O}(N^{-1})\,,
$$
as $N\rightarrow\infty$.
\end{proposition}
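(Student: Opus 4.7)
My plan is to bound the complementary probability $\PP_N[S_O(\omega) \text{ has cusp length } < L]$ and show it is $\mathcal{O}(N^{-1})$. The starting observation is geometric. In each ideal triangle the three standard horoballs at its vertices (the ones bounded by horocycle arcs of length one) are mutually tangent and pairwise disjoint, so after gluing $2N$ triangles according to $\omega$ these horoballs assemble into cusp neighbourhoods: a cusp whose vertex cycle has length $k$ inherits a standard horocycle of length exactly $k$. A direct computation in the upper half-plane model shows that the length-$L$ horocycle around such a cusp stays inside the cusp's standard horoball whenever $k$ exceeds an explicit threshold $k_0 = k_0(L)$ depending only on $L$. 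In particular, if every cusp has vertex cycle length at least $k_0$, then the length-$L$ horocycles are automatically disjointly embedded.

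Next I would analyse the cusps whose vertex cycle has length $k < k_0$, calling these small cusps. For a small cusp the length-$L$ horocycle must exit its standard horoball and propagate into the triangles adjacent to it through the side opposite to the cusp. I would show that the propagated horocycle can fail to be disjointly embedded only when the triangulation near the small cusp possesses an additional short combinatorial feature: either a second small cusp sits within a bounded combinatorial distance, or there is a short cycle in the dual graph of the triangulation involving the triangles incident to the small cusp. This yields a finite list, depending only on $L$, of bad patterns in $\omega$, each consisting of a bounded number of triangles together with specific gluing constraints.

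The final step is to count these bad patterns in the configuration model on $\Omega_N$. A pattern that prescribes $m$ pairings among $k$ labelled triangles appears with probability $\mathcal{O}(N^{-m})$, while there are at most $\mathcal{O}(N^k)$ choices of labels. The crucial structural claim is that every bad pattern forces $m \geq k+1$: identifying a small cusp already accounts for $k$ pairings (the vertex cycle of length $k$), and every additional geometric obstruction contributes at least one further pairing constraint. This yields a contribution of $\mathcal{O}(N^{-1})$ per pattern, so summing over the finite list and applying Markov's inequality produces the desired bound. The main obstacle is the second step: making the geometric-to-combinatorial reduction precise and verifying the inequality $m \geq k+1$ uniformly across all bad patterns. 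This calls for a careful analysis of how horocycles propagate through adjacent ideal triangles, and is precisely the quantitative refinement of \cite{BrooksMakover} carried out in \cite{Petri}.
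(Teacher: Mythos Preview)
The paper does not actually prove this proposition: it is stated as a result from \cite{Petri} and no argument is given beyond the citation. Your proposal is therefore not being compared against a proof contained in the present text but against the original source. That said, your outline is a faithful sketch of the strategy used there: reduce the condition ``cusp length $\geq L$'' to a combinatorial condition on the triangulation, observe that a single short vertex cycle is not by itself an obstruction (so the $\Theta(1)$ probability of short cusps is not fatal), identify the finitely many local bad patterns consisting of a short vertex cycle together with a second short feature nearby, and then show that each such pattern carries one more pairing constraint than free triangle choices, yielding an $O(N^{-1})$ expected count and hence an $O(N^{-1})$ probability via a first-moment bound. You correctly flag the delicate step --- making precise how a length-$L$ horocycle propagates out of a short cusp and verifying $m\geq k+1$ uniformly across all bad configurations --- as the place where the real work lies, and you correctly attribute that work to \cite{Petri}. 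In short, your proposal and the paper agree: both defer to \cite{Petri}.
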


\subsection{The geometry of geodesics on $S_O(\omega)$}\label{sec:GeometryOfGeodesics}

To understand the geometry of curves on $S_O(\omega)$ it is convenient to use an alternative (but equivalent) definition of what a hyperbolic surface is. Namely, every orientable hyperbolic surface $S$ can be obtained as a quotient of the form
\begin{equation*}
S=\HH^2/G\,,
\end{equation*}
where
\begin{equation*}
G<\mathrm{Isom}^+(\HH^2) \simeq \mathrm{PSL}_2(\RR)
\end{equation*}
is a discrete and torsion-free subgroup. Here, ${\rm PSL}_2(\RR)$ denotes the projectivized group of all real $2\times 2$ matrices with determinant $1$. This representation is highly non-unique. For example, given any $h\in\mathrm{PSL}_2(\RR)$ the group $hGh^{-1}$ gives rise to an isometric copy of the surface $S$.

Given an element $g\in G$, its translation length is given by
\begin{equation*}
T_g := \inf_{x\in\HH^2}\{d_{\HH^2}(x,gx)\}\,.
\end{equation*}
There are two types of elements in $G$, namely, hyperbolic and parabolic elements, distinguished by the fact that $T_g>0$ when $g$ is hyperbolic and $T_g=0$ when $g$ is parabolic. For a hyperbolic element $g\in G$ one has that
\begin{equation*}
T_g=2\cosh^{-1}\left(\frac{\abs{\tr{g}}}{2}\right)\,,
\end{equation*}
where $\tr{g}$ denotes the trace of any representative of $g$ as an $\mathrm{SL}_2(\RR)$-matrix. Furthermore, the translation distance of a hyperbolic element $g$ is realized by a set of elements called the axis of $g$, which form a geodesic between the two fixed points of $g$.

It follows from the negative curvature of the metric on a hyperbolic surface that in every non-trivial homotopy class of closed curves there is a unique geodesic which minimizes the length among all curves in that homotopy class. Furthermore, we can find an element $g\in G$ such that the axis of $g$ projects to this curve and the length of the curve is equal to $T_g$.

We want to understand the lengths of geodesics on $S_O(\omega)$. The construction we presented only gives us the surface and not the corresponding group $G$, which means that there are many equivalent choices for such a group. We will however not reconstruct the entire group but rather work locally. Given a closed curve $\gamma$ on $S_O(\omega)$ we will find an element $g\in\mathrm{PSL}_2(\RR)$ that lies in some group uniformizing $S_O(\omega)$ and the axis of which under this identification projects to $\gamma$.

What we do is the following. Given a geodesic $\gamma$, we pick a point on $\gamma$ and a direction to travel in. This point lies on a certain ideal triangle, which we identify with the triangle with vertices $0$, $1$ and $\infty$ in $\HH^2$. Each time $\gamma$ goes into a new triangle, the orientation of the surface tells us whether this new triangle lies to the right or the left of the previous triangle. We trace $\gamma$ and copy the triangles into $\HH^2$ (with shear $0$ either to the right or the left of the previous triangle) until we reach the point from which we departed again. There is a unique orientation preserving isometry of $\HH^2$ that sends the first triangle we drew to the last one. This is the isometry we are looking for.

Because our gluing (of ideal triangles with shear $0$) is very particular, we can actually write down the isometry explicitly. This goes as follows. From the curve $\gamma$ and the orientation on the surface we obtain a word in $L$ (for left) and $R$ (for right) by tracing $\gamma$. Call this word $w_\gamma$. If we replace the letters $L$ and $R$ by the matrices
\begin{equation*}
L:=\left(\begin{array}{cc} 1 & 1 \\ 0 & 1 \end{array}\right)\quad \text{ and }\quad R:=\left(\begin{array}{cc} 1 & 0 \\ 1 & 1 \end{array}\right)
\end{equation*}
then $w_\gamma$ becomes a matrix. The length of $\gamma$ is given by
\begin{equation}\label{eq:LengthGamma}
\ell(\gamma) = 2\cosh^{-1}\left(\frac{\abs{\tr{w_\gamma}}}{2}\right)\,.
\end{equation}
Note that the identification $\gamma\mapsto w_\gamma$ is not a well-defined map. If we start tracing $\gamma$ at a different point then we obtain a cyclic permutation of the letters and if we trace $\gamma$ in the opposite direction we obtain the same word read backwards with $L$ and $R$ interchanged, which on the matrix level corresponds to taking the transpose. Luckily the trace of $w_\gamma$ is preserved under all these operations, which means that \eqref{eq:LengthGamma} does in fact make sense. 

From the discussion above we conclude that if we want to understand the length spectrum of $S_O(\gamma)$ then we need to find the words that appear as (homotopy classes of) curves on $S_O(\omega)$. Of course we only care about words up to cyclic permutation and the inversion described above. Because of this it will be convenient to use these operations to define an equivalence relation `$\sim$' on the set of words in $L$ and $R$. This set of words itself will be denoted by $\{L,R\}^*$ and we will denote the class of a word $w$ by $[w]$.

Now define the random variable
\begin{equation}\label{eq:DefZwN}
Z_{[w],N}:\Omega_N\to\mathbb{N}\,,
\end{equation}
which counts the number of homotopy classes (of geodesics) corresponding to $[w]$. Most of this paper will be concerned with studying the behavior of these random variables.

\subsection{Overview of known results}

Now that we have described the model properly, let us collect some of the properties that are known for these surfaces.

The first question that needs to be answered is what the topology of these surfaces is like. First of all, the surfaces might not even be connected, $\Omega_N$ for instance includes gluings of the surfaces into $N$ tori or spheres. These disconnected surfaces turn out to form a negligible set, which is a direct consequence of the connection between random surfaces and the configuration model for random trivalent graphs. Concretely, we have the following theorem, independently proved by Bollob\'as and Wormald.

\begin{theorem} (\cite{Bollobas,Wormald}) We have
$$
\lim_{N\rightarrow\infty} \PP_N[S_O\text{ and }S_C\text{ are connected}] = 1\,.
$$
\end{theorem}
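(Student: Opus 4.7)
My plan is to reduce the statement to a standard connectivity fact for the configuration model on $3$-regular multigraphs, which is then proved by a first-moment union bound. First, I would observe that the pairing $\omega \in \Omega_N$ is literally the data defining an instance of the configuration model on $2N$ vertices of degree $3$: each ideal triangle is a vertex with three half-edges (its sides), and $\omega$ prescribes the perfect matching of the $6N$ half-edges. Writing $G(\omega)$ for the resulting trivalent multigraph, the surface $S_O(\omega)$ is connected if and only if $G(\omega)$ is connected, since every triangle is path-connected and any path in the surface that crosses shared edges projects to a walk in $G(\omega)$. Compactification to form $S_C(\omega)$ only adds one point per cusp, and each cusp lies inside a single component, so no components are merged. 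Hence all three objects have the same number of components, and it suffices to show $\PP_N[G(\omega) \text{ is connected}] \to 1$.

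Next, I would carry out a union bound over the size of the smallest component. If $G(\omega)$ is disconnected, there exists a vertex subset $S$ of size $i \le N$ whose $3i$ half-edges are matched entirely among themselves; in particular $3i$ must be even, so $i$ is even. For a fixed such $S$, this event has probability
$$
p_i \;=\; \frac{(3i-1)!!\,(6N-3i-1)!!}{(6N-1)!!} \;=\; \prod_{k=0}^{3i/2-1}\frac{3i-1-2k}{6N-1-2k}\,,
$$
so the union bound gives
$$
\PP_N\big[G(\omega)\text{ disconnected}\big] \;\le\; \sum_{\substack{2 \le i \le N \\ i \text{ even}}}\binom{2N}{i}\,p_i.
$$
Combining the entropy bound $\binom{2N}{i}\le(2eN/i)^i$ with the double-factorial Stirling estimate $(3i-1)!!\le C_1 (3i/e)^{3i/2}$ and the trivial lower bound $6N-1-2k \ge 6N-3i$, a short computation yields
$$
\binom{2N}{i}\,p_i \;\le\; C\,\Big(\tfrac{i}{2eN}\Big)^{i/2}
$$
for an absolute constant $C>0$. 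The sum is therefore dominated by its $i=2$ term, which is $O(N^{-1})$, and we conclude $\PP_N[G(\omega)\text{ disconnected}]=O(N^{-1})\to 0$.

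The main obstacle, as is typical with the configuration model, is the mid-to-large-$i$ range, where the binomial coefficient $\binom{2N}{i}$ is huge and one must extract enough cancellation from the explicit product formula for $p_i$ to beat it. The entropy bound for $\binom{2N}{i}$ paired with the double-factorial Stirling estimate for $(3i-1)!!$ is the standard way to achieve this, and any looser bound on either factor would fail once $i$ is proportional to $N$, since the two quantities are then nearly balanced. Small-$i$ terms individually decay like $N^{-i/2}$, and with the geometric decay above they sum to $O(N^{-1})$, finishing the argument.
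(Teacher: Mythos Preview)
The paper does not prove this statement; it is quoted as a classical result of Bollob\'as and Wormald on the configuration model, so there is no proof in the text to compare against. Your reduction of connectivity of $S_O(\omega)$ and $S_C(\omega)$ to connectivity of the associated trivalent multigraph $G(\omega)$ is correct, and the first-moment union-bound strategy is the standard route to this result.

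There is, however, a genuine gap in the estimate. The inequality you assert,
\[
\binom{2N}{i}\,p_i \;\le\; C\Big(\frac{i}{2eN}\Big)^{i/2},
\]
does not follow from the three ingredients you list. Carrying out the computation with exactly those bounds --- $\binom{2N}{i}\le(2eN/i)^i$, $(3i-1)!!\le C_1(3i/e)^{3i/2}$, and each denominator factor at least $6N-3i$ --- one obtains instead
\[
\binom{2N}{i}\,p_i \;\le\; C_1\Big(\frac{4N^2 i}{e(2N-i)^3}\Big)^{i/2}\;\le\; C_1\Big(\frac{4i}{eN}\Big)^{i/2},
\]
the last step using $2N-i\ge N$ for $i\le N$. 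Since $4/e\approx 1.47>1$, this bound is vacuous once $i>eN/4$, so for $i$ in the range roughly $[0.7N,N]$ the union bound says nothing. (A sanity check at $i=N$: the entropy bound overshoots $\binom{2N}{N}\sim 4^N/\sqrt{\pi N}$ by a factor $(e/2)^N$, and replacing every denominator factor by its minimum loses another exponential factor; together these swamp the true decay $\binom{2N}{N}p_N\asymp 2^{-N}$.) To repair the argument you must apply Stirling on both the binomial and the double-factorial ratio simultaneously rather than bound them separately. One clean way is to rewrite
\[
\binom{2N}{i}\,p_i \;=\; \binom{2N}{i}\binom{3N}{3i/2}\Big/\binom{6N}{3i},
\]
after which $\binom{n}{xn}=2^{nH(x)+O(\log n)}$ with $x=i/(2N)$ gives $\binom{2N}{i}p_i=2^{-N H(x)+O(\log N)}$, uniformly exponentially small over $2\le i\le N$; summing then yields the desired $O(N^{-1})$.
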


Given the fact that $S_O$ and $S_C$ are asymptotically almost surely connected and come with a triangulation, we can compute their genus using the Euler characteristic. In case of a connected surface, we have
\begin{equation*}
g = 1+\frac{N}{2} - \frac{n}{2}\,,
\end{equation*}
where $n$ is the number of cusps of $S_O$ or equivalently the number of vertices of the given triangulation. This means that the random variable we need to understand is $n$. It turns out that on average $n$ is at most logarithmic in $N$. This can be proved by counting methods, which has been done in \cite{BrooksMakover,DunfieldThurston,GamburdMakover,PippengerSchleich}. By exploiting a connection to random permutations Gamburd \cite{Gamburd} was able to determine the precise asymptotic distribution of $n$. Because this is just an overview, we will content ourselves with the following weaker version of the result. We shall write $a_N\sim b_N$ for two real sequences $(a_N)_{N\in\NN}$ and $(b_N)_{N\in\NN}$ if $a_N/b_N\to 1$, as $N\to\infty$. Recall that we use the symbol $\EE_N$ to denote expectation with respect to the probability measure $\PP_N$.

\begin{theorem} (\cite{Gamburd}) We have
$$
\EE_N[g] \sim \frac{N}{2}\,,
$$
as $N\to\infty$.
\end{theorem}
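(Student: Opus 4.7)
The plan is to extract $\EE_N[g]$ from an Euler characteristic calculation and then show that all other terms are negligible compared with $N/2$. Since $S_O(\omega)$ carries a natural triangulation with $2N$ faces, $3N$ edges (one per glued pair of sides) and exactly $n=n(\omega)$ vertices (the cusps), I obtain
\[
\chi(S_O(\omega)) = n - 3N + 2N = n - N.
\]
For the compactification $S_C(\omega)$ with $k=k(\omega)$ connected components of genera $g_1,\ldots,g_k$, the Euler characteristic equals $2k - 2\sum_i g_i$. Interpreting $g$ as $\sum_i g_i$ in the possibly disconnected case, this yields
\[
g = k + \tfrac{N-n}{2}.
\]
Taking expectations reduces the claim to the two estimates $\EE_N[n] = o(N)$ and $\EE_N[k] = o(N)$.

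The bound on $\EE_N[k]$ is trivial, since every connected component contributes at least one cusp and hence $k\le n$ pointwise. The entire task is therefore to control $\EE_N[n]$. To do so, I would identify $n$ with the number of cycles of a permutation $\pi_\omega$ on the $6N$ oriented sides, obtained as $\pi_\omega = \rho\circ \iota_\omega$, where $\rho$ is the fixed orientation-preserving cyclic rotation of the three sides of each of the $2N$ triangles and $\iota_\omega$ is the fixed-point-free involution encoded by the matching $\omega$. Cusps of $S_O(\omega)$ correspond bijectively to the orbits of $\pi_\omega$.

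The first moment $\EE_N[n]$ then reduces to a first-moment count of cycles of $\pi_\omega$. For each $j$, the expected number of $j$-cycles is obtained by picking an ordered $j$-tuple of distinct sides in a compatible way and computing the probability that the random matching closes the cycle: each prescribed pair costs a factor of order $(6N)^{-1}$, while the number of compatible tuples is of order $(6N)^{j-1}$, and dividing by $j$ for cyclic symmetry gives a contribution of order $1/j$. Summing over $j$ up to the trivial cut-off $6N$ produces a harmonic sum, so $\EE_N[n] = O(\log N)$. Alternatively, one can invoke Gamburd \cite{Gamburd}, who identifies the exact asymptotic distribution of $n$ in terms of cycle statistics of a uniform matching twisted by the fixed permutation $\rho$.

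Combining the two bounds yields $\EE_N[g] = N/2 + O(\log N) \sim N/2$, as required. The main obstacle is precisely the first-moment bookkeeping in the previous paragraph: because $\pi_\omega$ is the composition of a uniform random matching with the fixed permutation $\rho$ rather than a uniform random permutation itself, the cycle-length enumeration is not completely standard and some combinatorial care (or an appeal to \cite{Gamburd}) is needed. Everything else is elementary.
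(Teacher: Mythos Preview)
The paper does not give its own proof of this statement: it is presented in the overview of known results and simply attributed to \cite{Gamburd}, with the surrounding text explaining that the Euler characteristic formula $g = 1 + \tfrac{N}{2} - \tfrac{n}{2}$ reduces the question to controlling the number $n$ of cusps, and that $\EE_N[n]$ is at most logarithmic in $N$ by the counting arguments in \cite{BrooksMakover,DunfieldThurston,GamburdMakover,PippengerSchleich}. Your proposal is exactly this argument spelled out, with the additional (and correct) observation that the connected-components count $k$ is dominated pointwise by $n$, so the possibly-disconnected case requires no extra work.

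Your outline is sound. Two minor remarks. First, the triangulation with $n$ vertices is really a cell decomposition of $S_C(\omega)$, not of $S_O(\omega)$; the formula $\chi = n - N$ and hence $g = k + (N-n)/2$ is computed on the compactification, which is what you want anyway. Second, offering to ``invoke Gamburd \cite{Gamburd}'' as an alternative to the first-moment count is mildly circular, since the theorem itself is credited to that paper; the cleaner route is the direct cycle-count you sketch, and the references \cite{BrooksMakover,GamburdMakover,PippengerSchleich} already carry out precisely that computation, so you can cite them for the $\EE_N[n] = O(\log N)$ step rather than redoing the bookkeeping yourself.
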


The next question is how the geometry of these surfaces behaves. We will not go through all the known results. Instead, we give one sample result, in the form of the following theorem due to Brooks and Makover. To state it, let us recall the definition of the Cheeger constant and the systole of a compact surface $S$. The Cheeger constant $h(S)$ is
$$
h(S) := \inf_A \bigg\{\frac{\ell(\partial A)}{{\rm area}(A)}:{\rm area}(A)\leq \frac{1}{2}{\rm area}(S)\bigg\}\,,
$$
where the infimum runs over all open subsets $A\subseteq S$ with smooth boundary and ${\rm area}(\,\cdot\,)$ denotes the Riemannian volume (area) on $S$, while $\ell(\partial A)$ is the Riemannian length of the boundary $\partial A$ of $A$. Moreover, the systole $\sys(S)$ of $S$ is the length of the smallest non-contractible curve (loop) in $S$ and ${\rm diam}(S)$ denotes the diameter of $S$, that is, the maximal Riemannian distance that two points on $S$ can have. Further recall that we write $g(S)$ for the genus of $S$.

\begin{theorem}\label{thm:brooksmakover}(\cite{BrooksMakover}) There exist constants $C_1,\ldots,C_4\in(0,\infty)$ such that the following assertions are true.
\begin{itemize}
\item[(a)] The first non-trivial eigenvalue $\lambda_1$ of the Laplacian satisfies $$\lim_{N\to\infty}\PP_N [\lambda_1(S_C)>C_1]=1\,.$$
\item[(b)] The Cheeger constant $h$ satisfies $$\lim_{N\to\infty}\PP_N[h(S_C)>C_2]=1\,.$$
\item[(c)] The systole $\sys$ satisfies $$\lim_{N\to\infty}\PP_N [\sys(S_C)>C_3] = 1\,.$$
\item[(d)] The diameter ${\rm diam}$ satisfies $$\lim_{N\to\infty}\PP_N [\mathrm{diam}(S_C)<C_4\log\left(g(S_C)\right)] = 1\,.$$
\end{itemize}
\end{theorem}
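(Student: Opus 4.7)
The unifying strategy is to combine Lemma~\ref{lem:brooks} with Proposition~\ref{prp:cusplength} in order to reduce every geometric statement about $S_C$ to the corresponding statement about $S_O$, and then to exploit the fact that $S_O$ carries a canonical triangulation whose dual graph is distributed as the $3$-regular configuration model on $2N$ vertices. Fix once and for all a threshold $L$ so large that the constant $\delta(L)$ provided by Lemma~\ref{lem:brooks} satisfies, say, $\delta(L)<1$; by Proposition~\ref{prp:cusplength} the event $\cE_L=\{S_O\text{ has cusp length}\geq L\}$ has probability $1-\mathcal{O}(N^{-1})$, and throughout we work on this event so that hyperbolic lengths, diameters and isoperimetric ratios on $S_O$ and on $S_C$ only differ by bounded multiplicative factors.

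For the systole (c), I would use the word encoding of Section~\ref{sec:GeometryOfGeodesics}: any closed geodesic on $S_O$ of length $\leq C$ corresponds to a word $w$ in $L$ and $R$ with $|{\rm tr}\,w|\leq 2\cosh(C/2)$, and only finitely many equivalence classes $[w]\in\{L,R\}^*/\!\sim$ realise such a trace. For each such class, the random variable $Z_{[w],N}$ counts the number of times the word appears as a cycle in the triangulation; a first-moment estimate in the configuration model (essentially a counting of oriented closed paths of bounded length in a uniform pairing of $6N$ half-edges) shows that $\EE_N[Z_{[w],N}]$ is bounded uniformly in $N$. Choosing $C=C_3$ so small that no equivalence class of bounded trace actually materializes for the trivial reason that the associated word cannot be realized in a surface with no boundary and large cusp length then yields $\PP_N[\sys(S_O)\geq 2C_3]\to 1$, and Lemma~\ref{lem:brooks} converts this to $\sys(S_C)\geq C_3$.

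For the diameter (d), I would pass to the dual $3$-regular graph $\Gamma(\omega)$ of the triangulation and use the classical fact that a random cubic graph on $2N$ vertices has graph-diameter $\mathcal{O}(\log N)$ with probability $1-o(1)$ (proved by a standard ball-growth / second-moment argument in the configuration model). Each triangle of $S_O$ has bounded hyperbolic ``thick part'' diameter once the cusps are truncated at length $L$, so an adjacency step in $\Gamma(\omega)$ corresponds to moving a bounded hyperbolic distance in the thick part of $S_O$. This gives $\mathrm{diam}(\mathrm{thick}(S_O))=\mathcal{O}(\log N)$ almost surely, and since $\log N$ and $\log g(S_C)$ differ by a bounded factor by the genus estimate of the preceding theorem, one more application of Lemma~\ref{lem:brooks} yields~(d).

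Statements (a) and (b) are linked by Cheeger's inequality $\lambda_1\geq h^2/4$, so it is enough to prove~(b); I would then deduce (a) from it. For (b), the plan is to use a Brooks--Burger-type comparison principle: a $3$-regular graph that is a combinatorial expander (uniform lower bound on its edge-expansion constant) gives rise, via the triangulation, to a hyperbolic surface whose Cheeger constant is bounded below. The combinatorial input is the well-known fact that a random cubic graph from the configuration model has edge-expansion bounded below by a positive constant with probability $1-o(1)$, proved by bounding, for each $k\leq N$, the expected number of vertex subsets of size $k$ with abnormally small boundary using standard configuration-model counts. The geometric comparison then controls the isoperimetric profile of the thick part of $S_O$ by that of $\Gamma(\omega)$; the thin cuspidal parts are handled separately using the explicit model $C_t$ of a cusp, which has uniformly bounded isoperimetric ratio on horocyclic slices, and the cusp-length condition of $\cE_L$ ensures that the cuspidal contributions can only improve the Cheeger constant by a bounded factor. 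Lemma~\ref{lem:brooks} once more transfers the bound from $S_O$ to $S_C$, yielding~(b) and hence~(a).

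The main obstacle is the expansion step (b): showing that combinatorial edge-expansion of $\Gamma(\omega)$ implies a uniform hyperbolic Cheeger constant on $S_O$ in the presence of the cusps requires a careful decomposition of any would-be cheap bisecting hypersurface into its cuspidal and its thick-part pieces, and an argument that the latter cannot be too efficient without producing a small cut in $\Gamma(\omega)$. Everything else is either a direct application of Lemma~\ref{lem:brooks} and Proposition~\ref{prp:cusplength} or a standard configuration-model computation.
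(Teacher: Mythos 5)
The paper does not prove this theorem: it is quoted verbatim from Brooks--Makover as part of the survey of known results, so there is no in-text argument to compare yours against. Judged on its own, your proposal does reconstruct the actual Brooks--Makover strategy (combinatorial expansion of the dual cubic graph in the configuration model, transferred to the surface via Brooks's comparison machinery, with Cheeger's inequality $\lambda_1\geq h^2/4$ linking (a) to (b)), and you correctly flag the genuinely hard step, namely converting edge-expansion of $\Gamma(\omega)$ into a hyperbolic isoperimetric bound in the presence of cusps.

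Two soft spots. First, in (c) the first-moment computation is both insufficient and unnecessary: a uniform bound on $\EE_N[Z_{[w],N}]$ does not show that short geodesics are absent with probability tending to $1$. What actually does the work is deterministic: every closed geodesic on $S_O(\omega)$ carries a word $w_\gamma$ whose trace is an integer, the words $L^k$ and $R^k$ of trace $2$ are parabolic (they encircle cusps), and every hyperbolic word has $\abs{\tr{w_\gamma}}\geq 3$, so $\sys(S_O)\geq 2\cosh^{-1}(3/2)$ always. The only probabilistic input in (c) is the cusp-length event needed to invoke Lemma \ref{lem:brooks}. Second, Lemma \ref{lem:brooks} as stated compares only lengths of closed geodesics, so it does not by itself transfer diameters or Cheeger constants from $S_O$ to $S_C$ as you use it in (b) and (d); for those you need the full strength of Brooks's comparison theorem (the paper explicitly notes that the lemma is a special case of a more general result), which compares the two metrics outside the cusp neighbourhoods. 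With those repairs your outline matches the standard proof.
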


Finally, we mention that Gamburd's topological results also have geometric implications, detailed in \cite{Gamburd}.

\section{Probabilistic background: The Chen-Stein method}\label{sec:prob}

\subsection{Characterization and Chen-Stein equation}

This section is devoted to an introduction to the Chen-Stein method for Poisson approximation. We have tried to highlight the key ideas and, like in the previous section, we also decided to include some material that by now is classical in the literature in order to keep the paper self-contained and to introduce the non-specialized reader to the subject.

Before providing the details, we use the occasion to introduce some notation. We let $(\Omega,\cF,\PP)$ be a probability space that is rich enough to carry all our (in all cases non-negative and integer-valued) random variables. If $Z$ is such a random variable, we write $\cL(Z)$ for its law, i.e., the image measure of $\PP$ on $\NN_0:=\{0,1,2,\ldots\}$ that is induced by $Z$. We say that $\cL(Z)$ is a Poisson distribution with parameter $\l\in(0,\infty)$, formally $\cL(Z)=\Po_\l$, if
$$
\PP[Z=k]={\l^k\over k!}e^{-\l}\,,\qquad k\in\NN_0\,.
$$
Expectation with respect to $\PP$ is denoted by $\EE$, so that in particular $\EE[Z]=\l$ if $\cL(Z)=\Po_\l$.

\medbreak

The starting point of the Chen-Stein method is the following characterization of Poisson random variables that goes back (at least) to the work of Chen \cite{Chen}. 

\begin{lemma}(\cite[Chapter XVIII.1]{Venkatesh})\label{lem:CharacterizationPoisson}
Let $Z$ be a non-negative integer-valued random variable. Then $\cL(Z)=\Po_\l$ for some $\l\in(0,\infty)$ if and only if
$$
\EE[\l g(Z+1)-Zg(Z)]=0
$$
for all bounded functions $g:\NN_0\to\RR$.
\end{lemma}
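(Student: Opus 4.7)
The plan is a standard two-direction argument. Both directions reduce to the identity
\[
\l\,p_{n-1}=n\,p_n\qquad\text{for }n\geq 1\,,
\]
where $p_k:=\PP[Z=k]$ for $k\in\NN_0$, so I would organize the proof around this recursion.

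For the forward direction, assume $\cL(Z)=\Po_\l$ and let $g:\NN_0\to\RR$ be bounded (so the series below converge absolutely and rearrangement is justified). I would compute directly
\[
\EE[\l\,g(Z+1)]=\sum_{k=0}^\infty \l\,g(k+1)\,\frac{\l^k}{k!}e^{-\l}=\sum_{k=0}^\infty g(k+1)\,\frac{\l^{k+1}}{k!}e^{-\l}\,,
\]
then substitute $j=k+1$ to rewrite this as $\sum_{j=1}^\infty j\,g(j)\,\frac{\l^j}{j!}e^{-\l}$, which (since the $j=0$ term vanishes) equals $\EE[Z\,g(Z)]$. Subtracting gives the desired identity.

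For the reverse direction, assume $\EE[\l\,g(Z+1)-Z\,g(Z)]=0$ for every bounded $g$. The key idea is to test the identity with indicator functions: for each $n\in\NN$, take $g=\mathbf{1}_{\{n\}}$. Then $\EE[\l\,g(Z+1)]=\l\,\PP[Z+1=n]=\l\,p_{n-1}$ and $\EE[Z\,g(Z)]=n\,p_n$, yielding precisely $\l\,p_{n-1}=n\,p_n$. Iterating this recursion from $p_0$ produces $p_n=\frac{\l^n}{n!}p_0$ for all $n\geq 0$, and imposing the normalization $\sum_{n=0}^\infty p_n=1$ forces $p_0=e^{-\l}$, so $\cL(Z)=\Po_\l$.

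The only point requiring minor care is the forward direction, where one should note that the boundedness of $g$ together with the exponential decay of the Poisson weights $\frac{\l^k}{k!}e^{-\l}$ makes both $\EE[g(Z+1)]$ and $\EE[Z\,g(Z)]$ absolutely summable, legitimizing the reindexing. There is no real obstacle beyond this routine verification; the proof is essentially an exercise in writing out the Poisson PMF and recognising the recursion $n\,p_n=\l\,p_{n-1}$ as the defining property of $\Po_\l$.
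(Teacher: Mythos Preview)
Your proof is correct and is the standard argument for this characterization. Note, however, that the paper does not actually prove this lemma: it is simply quoted from \cite[Chapter XVIII.1]{Venkatesh} without proof, so there is no ``paper's own proof'' to compare against. The argument you give---verifying the identity directly from the Poisson weights in one direction, and testing with indicators $g=\mathbf{1}_{\{n\}}$ to extract the recursion $\lambda p_{n-1}=np_n$ in the other---is precisely the textbook approach one would find in the cited reference.
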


Bearing in mind Lemma \ref{lem:CharacterizationPoisson}, the basic philosophy of the Chen-Stein method can roughly be summarized as follows. If for an arbitrary non-negative integer-valued random variable $W$ on $(\Omega,\cF,\PP)$ the difference $\EE[\lambda g(W+1)-Wg(W)]$ is `close' to zero for all bounded functions $g:\NN_0\to\RR$ then $\cL(W)$ should be `close' to $\Po_\l$.

To measure the closeness of the distributions $\cL(X)$ and $\cL(Y)$ of two non-negative integer-valued random variables $X$ and $Y$ one can use different probability metrics. Let $\cH$ be a class of bounded test functions $h:\NN_0\to\RR$ and put
$$
d_\cH(X,Y) = d_\cH(\cL(X),\cL(Y)) := \sup_{h\in\cH}\big|\EE[h(X)]-\EE[h(Y)]\big|\,.
$$
If $\cH={\rm Ind}:=\{{\bf 1}_B:B\subseteq\NN_0\}$ is the class of indicator functions of subsets of $\NN_0$, then $d_\cH$ is nothing else than the total variation metric and if $\cH={\rm Lip}:=\{h:\NN_0\to\RR:|h(m)-h(n)|\leq|m-n|,m,n\in\NN_0\}$ is the class of Lipschitz functions on $\NN_0$ with Lipschitz constant bounded by $1$, then $d_\cH$ becomes the Wasserstein metric on the space of probability measures on $\NN_0$. We shall concentrate on the total variation metric in this paper and write $d_{TV}:=d_{\rm Ind}$, that is,
$$
d_{TV}(X,Y)=\sup_{B\subseteq\NN_0}\big|\PP[X\in B]-\PP[Y\in B]\big|\,.
$$ 
We emphasize that convergence in total variation distance implies convergence in distribution. More precisely, if $d_{TV}(X_N,X)\to 0$, as $N\to\infty$, for a sequence $(X_N)_{n\in\NN}$ of random variables and another random variable $X$, then $X_N$ converges in distribution to $X$, or, equivalently, $\cL(X_N)$ weakly converges to $\cL(X)$, as $N\to\infty$.

The next step is to connect the definition of the metric $d_{TV}$ to the characterization of Poisson distributions presented in Lemma \ref{lem:CharacterizationPoisson}. To this end, let $h\in{\rm Ind}$, that is, $h={\bf 1}_B$ for some subset $B\subset\NN_0$. The so-called Chen-Stein equation associated with $h$ is then given by
\begin{equation}\label{eq:ChenSteinEq}
h(k) - \EE[h(Z)]= \l g(k+1) - kg(k)\,,\qquad k\in\NN_0\,,
\end{equation}
where here and below $\cL(Z)=\Po_\l$. For a given $h\in{\rm Ind}$, the (unique) bounded solution of \eqref{eq:ChenSteinEq} taking value zero at zero is denoted by $g_h$ and we use the notation ${\rm Sol}$ to indicate the space of all solutions of \eqref{eq:ChenSteinEq} that arise in this way. The next major step is to gain control on the properties of the solutions of the Chen-Stein equation \eqref{eq:ChenSteinEq}. Most importantly, one needs bounds on $g_h$ and its first-order forward difference, that is, bounds on
$$
\|g_h\|_\infty:=\sup_{k\in\NN_0}|g_h(k)|\qquad\text{and}\qquad\|\Delta g_h\|_\infty:=\sup_{k\in\NN_0}|\Delta g_h(k)|\,,
$$
where $\Delta g_h(k)=g_h(k+1)-g_h(k)$.

\begin{lemma}(\cite[Chapter XVII.3 and Theorem XVIII.3.2]{Venkatesh})\label{lem:Solutions}
If $h\in {\rm Ind}$, then
$$\|g_h\|_\infty\leq 3\min\Big\{1,{1\over\sqrt{\l}}\Big\}\qquad\text{and}\qquad\|\Delta g_h\|_\infty\leq{1-e^{-\l}\over\l}\leq\min\Big\{1,{1\over\l}\Big\}\,,
$$
independently of $h$. 
\end{lemma}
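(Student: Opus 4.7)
The plan proceeds in three main steps, all starting from an explicit solution to the Chen-Stein equation \eqref{eq:ChenSteinEq}. First, I would multiply the identity $\l g(k+1) - k g(k) = h(k) - \EE[h(Z)]$ by the Poisson weight $p_k := e^{-\l}\l^k/k!$ and exploit the balance relation $kp_k = \l p_{k-1}$ (valid for $k\geq 1$, with $p_{-1}:=0$) to rewrite the left-hand side as $\l[p_k g(k+1) - p_{k-1} g(k)]$, which telescopes. Summing from $0$ to $n-1$ and invoking the initial condition $g_h(0)=0$ yields
\begin{equation*}
g_h(n) = \frac{(n-1)!}{\l^n}\sum_{k=0}^{n-1}\frac{\l^k}{k!}\bigl(h(k) - \EE[h(Z)]\bigr),\qquad n\geq 1.
\end{equation*}
Since $\EE[h(Z) - \EE h(Z)] = 0$, the full sum $\sum_{k=0}^\infty$ of the summand vanishes, so $g_h(n)$ also admits the complementary representation obtained by replacing $\sum_{k=0}^{n-1}$ with $-\sum_{k=n}^{\infty}$. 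I would use the two forms according to which produces a shorter range of summation.

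For the bound on $\|g_h\|_\infty$, I would use that $h = \mathbf{1}_B$ is an indicator, so $|h(k) - \EE h(Z)| \leq 1$. The trivial estimate $\sum_{k<n}\l^k/k!\leq e^\l$ combined with the identity $(n-1)!e^\l/\l^n = 1/(\l p_{n-1})$ reduces the problem to bounding $1/p_{n-1}$. For $n \leq \l$ this gives $|g_h(n)| \leq 1$ directly. For $n$ close to $\l$ one invokes Stirling's approximation of $p_{\lfloor \l \rfloor}$, which produces a factor of order $1/\sqrt{\l}$, and for $n$ far from $\l$ one switches to the complementary representation so that the tail $\sum_{k\geq n}p_k$ provides the decay. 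Combining the three regimes and absorbing elementary numerical factors into a universal constant yields $\|g_h\|_\infty \leq 3\min\{1,1/\sqrt{\l}\}$; the constant $3$ is a convenient universal majorant of the three separate estimates.

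The bound on $\|\Delta g_h\|_\infty$ is the most delicate. Here I would exploit the linearity of the Stein equation in $h$, together with $g_{\mathbf{1}_B}+g_{\mathbf{1}_{B^c}} = 0$ (which follows from $g_1\equiv 0$), to reduce the analysis to singletons $B = \{j\}$. A direct computation from the explicit formula above shows that $\Delta g_{\{j\}}(k) = g_{\{j\}}(k+1) - g_{\{j\}}(k)$ has one sign on $\{0,\dots,j-1\}$ and the opposite sign on $\{j,j+1,\dots\}$. Since $\Delta g_{\mathbf{1}_B} = \sum_{j\in B}\Delta g_{\{j\}}$, for each fixed $k$ the supremum $|\Delta g_{\mathbf{1}_B}(k)|$ over all $B$ is attained when $B$ is one of the two half-intervals $\{0,1,\dots,k\}$ or $\{k+1,k+2,\dots\}$. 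For these extremal indicators, both representations of $g_h$ can be combined with the Stein equation to give a closed-form expression for $\Delta g_h(k)$ in terms of partial Poisson sums; explicit resummation and taking the supremum over $k$ yields the sharp value $(1-e^{-\l})/\l$. The weaker bound $\min\{1,1/\l\}$ then follows from the elementary inequality $1-e^{-\l}\leq\min\{\l,1\}$.

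The main obstacle will be the sharp constant $(1-e^{-\l})/\l$: a crude bound $C\min\{1,1/\l\}$ follows quickly once the sign pattern of $\Delta g_{\{j\}}$ is established, but the precise constant requires pinning down the extremal sets and carrying out the Poisson resummation without loss. The $\l^{-1/2}$ refinement in the bound on $\|g_h\|_\infty$ is of similar flavour, resting ultimately on the Stirling estimate $p_{\lfloor\l\rfloor}\approx 1/\sqrt{2\pi\l}$ at the Poisson mode.
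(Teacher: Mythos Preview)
The paper does not supply a proof of this lemma at all: it is stated with a citation to \cite[Chapter XVII.3 and Theorem XVIII.3.2]{Venkatesh} and then used as a black box. So there is nothing in the paper to compare your argument against.

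For what it is worth, your outline is the standard derivation one finds in the cited reference and in Barbour--Holst--Janson: write down the explicit solution by telescoping against the Poisson weights, use the complementary tail representation, and for $\|\Delta g_h\|_\infty$ reduce to singletons $B=\{j\}$ and exploit the sign pattern of $\Delta g_{\{j\}}$ to identify the extremal half-interval sets. One small slip: after the crude estimate $\sum_{k<n}\l^k/k!\leq e^\l$ you obtain $|g_h(n)|\leq 1/(\l p_{n-1})$, not $1/p_{n-1}$, and this bound by itself does \emph{not} give $|g_h(n)|\leq 1$ for $n\leq\l$ (indeed $1/(\l p_{n-1})$ can be large). The usual route keeps the partial sum $\sum_{k<n}p_k$ rather than replacing it by $1$, and then uses that $\sum_{k<n}p_k/(\l p_{n-1})$ and $\sum_{k\geq n}p_k/(\l p_{n-1})$ are both controlled via elementary Poisson tail estimates together with the Stirling bound at the mode. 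Apart from this, the plan is sound and matches the textbook argument the paper is invoking.
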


Now, let $W$ be a non-negative integer-valued random variable on $(\Omega,\cF,\PP)$ that we wish to compare to the Poisson variable $Z$. We plugin $W$ for $k$ in \eqref{eq:ChenSteinEq}, take expectations and absolute values on both sides of the Chen-Stein equation and finally the supremum over all $h\in{\rm Ind}$. This gives
\begin{equation}\label{eq:ChenSteinStart}
d_{TV}(W,Z) = \sup_{g\in{\rm Sol}}\big|\EE[\l g(W+1)-Wg(W)]\big|\,.
\end{equation}
It turns out that the right-hand side (RHS) of \eqref{eq:ChenSteinStart} can further be bounded in many situations, cf.\ the monograph \cite{BarbourHolstJanson} for a collection of representative examples as well as the more recent survey article \cite{ChenRoellin}. We emphasize at this point that the RHS of \eqref{eq:ChenSteinStart} does no more involve the Poisson random variable $Z$. The fact that we are comparing $\cL(W)$ with ${\rm Po}_\lambda$ is encoded by the special form of the RHS that reflects the characterization of Poisson random variables that we formulated in Lemma \ref{lem:CharacterizationPoisson}.

\subsection{Poisson limit theorems}

We consider the following set-up that is taken from Chapter XVIII.7 in \cite{Venkatesh}, which in turn follows the presentation in \cite{ArratiaGoldsteinGordon}. Let $\Gamma\neq\emptyset$ be a finite index set and $\Xi_\Gamma:=\{X_\alpha:\alpha\in\Gamma\}$ be a family of random variables on a probability space $\Omega$ that only take values in $\{0,1\}$. It is neither required that these random variables are identically distributed nor that they are independent. In fact, to capture the dependence structure of $\Xi_\Gamma$, we define, for each $\alpha\in\Gamma$
\begin{itemize}
\item[-] $\Gamma_\alpha'$, a collection of indices $\beta$ of those random variables $X_\beta\in\Xi_\Gamma$ with $\alpha\neq\beta$ that are thought of as strongly dependent on $X_\alpha$ and
\item[-] $\Gamma_\alpha:=\Gamma\setminus(\Gamma_\alpha'\cup\{\alpha\})$, a collection of indices of random variables from $\Xi_\Gamma$ that are thought of as weakly dependent on $X_\alpha$.
\end{itemize}
so that for each $\alpha\in\Gamma$ the index set $\Gamma$ has a decomposition as the disjoint union $\Gamma=\{\alpha\}\sqcup\Gamma_\alpha'\sqcup\Gamma_{\alpha}$ and one has that $\Gamma_\alpha'=\emptyset$ and $\Gamma_\alpha=\Gamma\setminus\{\alpha\}$ if $\Xi_\Gamma$ is a family of mutually independent random variables. 

We furthermore assume that for each $\alpha\in\Gamma$, we are given a collection random variables $\{X_{\alpha,\beta}':\beta\in\Gamma \setminus \{\alpha\}\}$ that satisfy
\[X_{\alpha,\beta}'(\omega)\geq X_\beta(\omega)\;\text{for all}\;\beta\in\Gamma_\alpha\,,\omega\in\Omega\]
and
\[\PP[X_{\alpha,\beta}'=1] \;=\;\PP[X_{\beta}'=1|\;X_\alpha = 1]\;\text{for all}\;\beta\in\Gamma\setminus\{\alpha\}\]

(this is known as a monotone coupling).

It turns out that the distance between the distribution of the random variable $W:=\sum\limits_{\alpha\in\Gamma}X_\alpha$ and a suitable Poisson distribution can be assessed by means of the four quantities
\begin{align*}
 \Sigma_1 & := \sum_{\alpha\in\Gamma}(\EE[X_\alpha])^2\,, & \Sigma_2: & =  \sum_{\alpha\in\Gamma}\sum_{\beta\in\Gamma_\alpha'}(\EE[ X_\alpha])(\EE[ X_\beta])\,, \\
  \Sigma_3 & :=  \sum_{\alpha\in\Gamma}\sum_{\beta\in\Gamma_\alpha'}\EE[X_\alpha X_\beta]\,, &  \Sigma_4 & :=  \sum_{\alpha\in\Gamma}\sum_{\beta\in\Gamma_\alpha}\EE[X_\alpha X_\beta]-(\EE[X_\alpha])(\EE[X_\beta])\,. 
\end{align*}
We emphasize that the terms $\Sigma_1$, $\Sigma_2$, $\Sigma_3$ and $\Sigma_4$ only contain information on the first- and second-order moments as well as on the dependency neighborhoods $\Gamma_\alpha'$ of the random variables $X_\alpha\in \Xi_\Gamma$. Moreover we note that these quantities all only depend on the random variables $X_\alpha$ and not on the auxiliary variables $X_{\alpha,\beta}'$.

We can now rephrase the following simplified version of the result from Chapter XVIII.7 in \cite{Venkatesh}. It is known as the `coupling' approach to Poisson approximation via the Chen-Stein method. To keep the paper self-contained and to illustrate how the Chen-Stein method works in practice, we have decided to include the short proof.

\begin{theorem}\label{thm:Stein1D}
Define the random variable $W:=\sum\limits_{\alpha\in\Gamma}X_\alpha$, put $\lambda:=\sum\limits_{\alpha\in\Gamma}\EE[X_\alpha]$ and let $Z$ be a random variable such that $\cL(Z)=\Po_\lambda$. Suppose that $\lambda\in(0,\infty)$. Then
$$
d_{TV}(W,Z)\leq \min\Big\{1,{1\over\l}\Big\}\,\big(\Sigma_1+\Sigma_2+\Sigma_3+\Sigma_4\big)\,.
$$
\end{theorem}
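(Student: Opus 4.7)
The plan is to start from the Chen–Stein equation identity \eqref{eq:ChenSteinStart}, that is, from
$$
d_{TV}(W,Z)=\sup_{g\in{\rm Sol}}\big|\EE[\l g(W+1)-Wg(W)]\big|\,,
$$
and bound the expression inside the supremum in a way that exposes the four sums $\Sigma_1,\Sigma_2,\Sigma_3,\Sigma_4$. First I would use that each $X_\alpha$ takes values in $\{0,1\}$ to write
$$
\EE[X_\alpha\, g(W)]=\EE[X_\alpha]\,\EE[g(W)\,|\,X_\alpha=1]\,,
$$
and hence, summing over $\alpha\in\Gamma$,
$$
\EE[\l g(W+1)-Wg(W)]=\sum_{\alpha\in\Gamma}\EE[X_\alpha]\,\Big(\EE[g(W+1)]-\EE[g(W)\,|\,X_\alpha=1]\Big)\,.
$$
To handle the conditional expectation I would realize the monotone coupling: for each $\alpha$, let $V_\alpha:=\sum_{\beta\neq\alpha}X_{\alpha,\beta}'$, which by the hypothesis on the $X_{\alpha,\beta}'$ has the same distribution as $\sum_{\beta\neq\alpha}X_\beta$ under the conditional law given $X_\alpha=1$. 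Thus $\EE[g(W)\,|\,X_\alpha=1]=\EE[g(V_\alpha+1)]$ and the previous display becomes $\sum_\alpha\EE[X_\alpha]\,\EE[g(W+1)-g(V_\alpha+1)]$.

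Next I would apply the forward-difference bound $|g(m)-g(n)|\leq\|\Delta g\|_\infty |m-n|$ to obtain
$$
\big|\EE[g(W+1)-g(V_\alpha+1)]\big|\leq\|\Delta g\|_\infty\,\EE|W-V_\alpha|\,,
$$
and decompose $W-V_\alpha=X_\alpha+\sum_{\beta\in\Gamma_\alpha'}(X_\beta-X_{\alpha,\beta}')+\sum_{\beta\in\Gamma_\alpha}(X_\beta-X_{\alpha,\beta}')$. On the strongly dependent part $\Gamma_\alpha'$ I use only the crude triangle inequality $|X_\beta-X_{\alpha,\beta}'|\leq X_\beta+X_{\alpha,\beta}'$, which after taking expectation contributes $\EE[X_\beta]+\EE[X_\beta\,|\,X_\alpha=1]$. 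On the weakly dependent part $\Gamma_\alpha$ the monotonicity $X_{\alpha,\beta}'\geq X_\beta$ gives $\EE|X_\beta-X_{\alpha,\beta}'|=\EE[X_{\alpha,\beta}']-\EE[X_\beta]=\EE[X_\beta\,|\,X_\alpha=1]-\EE[X_\beta]$. Multiplying by $\EE[X_\alpha]$, using $\EE[X_\alpha]\,\EE[X_\beta\,|\,X_\alpha=1]=\EE[X_\alpha X_\beta]$, and summing over $\alpha$ should produce exactly the four sums
$$
\sum_\alpha\EE[X_\alpha]\,\EE|W-V_\alpha|\leq\Sigma_1+\Sigma_2+\Sigma_3+\Sigma_4\,.
$$

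Finally I would combine this with the estimate $\|\Delta g\|_\infty\leq\min\{1,1/\l\}$ from Lemma \ref{lem:Solutions}, which is uniform over $g\in{\rm Sol}$, and take the supremum to conclude the claimed bound on $d_{TV}(W,Z)$. The only delicate point I expect is the bookkeeping of the monotone coupling: one has to check that it is compatible with the fact that $V_\alpha$ reproduces the correct conditional law (which is built into the hypothesis $\PP[X_{\alpha,\beta}'=1]=\PP[X_\beta=1\,|\,X_\alpha=1]$ together with the existence of a joint realisation), and that the triangle/monotonicity inequalities align cleanly with the definitions of $\Sigma_2,\Sigma_3,\Sigma_4$; once this accounting is done, the rest is routine.
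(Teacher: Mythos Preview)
Your proposal is correct and follows essentially the same route as the paper's proof: introduce $V_\alpha=\sum_{\beta\neq\alpha}X_{\alpha,\beta}'$, rewrite $\EE[\lambda g(W+1)-Wg(W)]$ as $\sum_\alpha \EE[X_\alpha]\,\EE[g(W+1)-g(V_\alpha+1)]$, bound by $\|\Delta g\|_\infty\,\EE|W-V_\alpha|$, split $W-V_\alpha$ into the $X_\alpha$ term, the $\Gamma_\alpha'$ part (triangle inequality) and the $\Gamma_\alpha$ part (monotonicity), and collect the four sums before invoking Lemma~\ref{lem:Solutions}. The subtlety you flag about the coupling---that one really needs the joint law of $(X_{\alpha,\beta}')_{\beta\neq\alpha}$ to coincide with the conditional law of $(X_\beta)_{\beta\neq\alpha}$ given $X_\alpha=1$, not merely the marginals---is present in the paper's argument as well and is the standard Arratia--Goldstein--Gordon hypothesis, implicitly assumed.
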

\begin{proof}
Set $U_\alpha = W$ and $V_\alpha = \sum_{\beta\in\Gamma_\alpha\setminus\{\alpha\}}X_{\alpha,\beta}'$. With equation \eqref{eq:ChenSteinStart} in mind, note that
\[\EE[X_\alpha g(W)] = \EE[X_\alpha]\cdot\EE\left[\left. g\left(1+\sum_{\beta\in\Gamma\setminus\{\alpha\}}X_\beta\right)\; \right| X_\alpha =1\right] = \EE[X_\alpha]\cdot\EE[g(V_\alpha +1)]. \]
This implies that
\[\EE[\lambda g(W+1)- Wg(W)] = \sum_{\alpha\in\Gamma}\EE[X_\alpha]\cdot\EE[g(U_\alpha)-g(V_\alpha+1)]. \]
We have
\[ \abs{\EE[g(U_\alpha)-g(V_\alpha+1)]} \leq ||\Delta g||_\infty \cdot \EE[\abs{U_\alpha-V_\alpha}] \]
and thus obtain that
\[\abs{\EE[\lambda g(W+1)- Wg(W)]} \leq ||\Delta g||_\infty \sum_{\alpha\in\Gamma}\EE[X_\alpha]\cdot\EE[\abs{U_\alpha-V_\alpha}]. \]
Let us analyze this sum above term by term. By definition, we get that
\begin{eqnarray*}
\abs{U_\alpha-V_\alpha} & = & \abs{X_\alpha - \sum_{\beta\in\Gamma\setminus\{\alpha\}} X_{\alpha,\beta}'-X_\beta } \\[3mm]
 & \leq & X_\alpha + \sum_{\beta\in \Gamma_\alpha'}X_{\alpha,\beta}'+X_\beta + \sum_{\beta\in \Gamma_\alpha} X_{\alpha,\beta}' - X_\beta\,.
\end{eqnarray*}
Hence, using that $\EE[X_\alpha]\cdot\EE[X_{\alpha,\beta}'] = \EE[X_\alpha X_\beta]$, we see that
\begin{eqnarray}\label{eq:ProofCS3}
\abs{\EE[\lambda g(W+1)- Wg(W)]} \leq ||\Delta g||_\infty \sum_{\alpha\in\Gamma}\left(\EE[X_\alpha]^2 + \sum_{\beta\in\Gamma_\alpha '} \EE[X_\alpha X_\beta] + \EE[X_\alpha]\cdot \EE[X_\beta]\right. \notag \\
 + \left. \sum_{\beta\in\Gamma_\alpha}\EE[X_\alpha X_\beta] - \EE[X_\alpha]\cdot \EE[X_\beta]\right). 
\end{eqnarray}
A glance at \eqref{eq:ChenSteinStart} shows that this implies the result after we take the supremum over all $g\in{\rm Sol}$ in \eqref{eq:ProofCS3} and using Lemma \ref{lem:Solutions}.
\end{proof}

Note that once having derived a bound on the total variation distance $d_{TV}(W,Z)$ as above, this readily allows one to approximate individual probabilities with an explicit error bound. For example, if $W$ and $Z$ are random variables as in Theorem \ref{thm:Stein1D} we see that
$$
\big|\PP[W=k]-\PP[Z=k]\big|\leq 2d_{TV}(W,Z)
$$
for all $k\in\NN$. In particular, one has that
$$
\big|\PP[W=0]-e^{-\l}\big|\leq 2d_{TV}(W,Z)\,.
$$
A similar comment also applies to the multivariate setting that we are going to describe next.

\begin{remark}\rm 
For many purposes -- and especially the one of the present paper -- it is sufficient to use Theorem \ref{thm:Stein1D} in a form in which the constant $\min\{1,1/\l\}$ there is replaced by $1$.
\end{remark}

We shall later apply a multivariate version of Theorem \ref{thm:Stein1D}. For this we adapt the set-up introduced above, let $d\in\NN$ and $\{\Gamma_1,\ldots,\Gamma_d\}$ be a partition of the index set $\Gamma$ into non-empty disjoint subsets. Our interest is in the random vector
$$
\bW=(W_1,\ldots,W_d)\qquad\text{with}\qquad W_i:=\sum_{\alpha\in\Gamma_i}X_\alpha\,,\quad i\in\{1,\ldots,d\}\,,
$$
that we would like to compare with a random vector $\bZ=(Z_1,\ldots,Z_d)$ consisting of independent random variables $Z_i$ with $\cL(Z_i)=\Po_{\lambda_i}$, where $\lambda_i=\sum\limits_{\alpha\in\Gamma_i}\EE[X_\alpha]$ for $i\in\{1,\ldots,d\}$. To this end, we use the multivariate total variation distance
$$
d_{mTV}(\bW,\bZ):=\sup_{B\subseteq\NN_0^d}\big|\PP[\bW\in B]-\PP[\bZ\in B]\big|
$$
and recall that, like in the univariate case, convergence in the multivariate total variation distance implies convergence in distribution of the involved random variables (or weak convergence of their laws).
To present a bound for $d_{mTV}(\bW,\bZ)$, we introduce for $i\in\{1,\ldots,d\}$ the quantities
\begin{align*}
\Sigma_{1,i} & :=\sum_{\alpha\in\Gamma_i}(\EE[X_\alpha])^2\,, & \Sigma_{2,i} & :=\sum_{\alpha\in\Gamma_i}\sum_{\beta\in\Gamma_\alpha'}(\EE[ X_\alpha])(\EE[ X_\beta]), \\ \Sigma_{3,i} & :=\sum_{\alpha\in\Gamma_i}\sum_{\beta\in\Gamma_\alpha'}\EE[X_\alpha X_\beta], & 
\Sigma_{4,i} & :=\sum_{\alpha\in\Gamma_i}\sum_{\beta\in\Gamma_\alpha}\EE[X_\alpha X_\beta]-(\EE[ X_\alpha])(\EE[ X_\beta]),
\end{align*}
that are defined similarly as $\Sigma_1$, $\Sigma_2$, $\Sigma_3$ and $\Sigma_4$ above. We stress that we again assume the existence of auxiliary random variables $X_{\alpha,\beta}'$.

\begin{theorem}\label{thm:SteinMulti}
Suppose that $\l_i\in(0,\infty)$ for all $i\in\{1,\ldots,d\}$. Then
$$
d_{mTV}(\bW,\bZ)\leq 3\sum_{i=1}^d\big(\Sigma_{1,i}+\Sigma_{2,i}+\Sigma_{3,i}+\Sigma_{4,i}\big)
$$
\end{theorem}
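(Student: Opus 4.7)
My plan is to extend the univariate argument behind Theorem~\ref{thm:Stein1D} to the multivariate setting, working with a Chen-Stein equation that characterizes the product law of independent Poisson distributions and then re-running the coupling computation coordinate by coordinate.

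First, I would set up the multivariate Chen-Stein equation. Since $\bZ = (Z_1, \ldots, Z_d)$ has independent Poisson components $Z_i \sim \Po_{\lambda_i}$, there is a multivariate analogue of Lemma~\ref{lem:CharacterizationPoisson} asserting that $\bZ$ is uniquely characterized (among $\NN_0^d$-valued random vectors) by the identity
$$\EE\bigg[\sum_{i=1}^d \big(\lambda_i g(\bZ + e_i) - Z_i g(\bZ)\big)\bigg] = 0$$
for all bounded $g : \NN_0^d \to \RR$, with $e_1, \ldots, e_d$ denoting the standard basis vectors of $\RR^d$. For every subset $B \subseteq \NN_0^d$ and associated indicator test function $h = \mathbf{1}_B$, the corresponding Stein equation
$$h(\bk) - \EE[h(\bZ)] = \sum_{i=1}^d \big(\lambda_i g_h(\bk + e_i) - k_i g_h(\bk)\big)$$
admits a bounded solution $g_h$ whose coordinate-wise forward differences $\Delta_i g_h(\bk) := g_h(\bk + e_i) - g_h(\bk)$ satisfy the multivariate Stein factor estimate $\|\Delta_i g_h\|_\infty \leq 3$, uniformly over $h$ and over $i\in\{1,\ldots,d\}$. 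This is the analogue of Lemma~\ref{lem:Solutions} in the multivariate setting and is the source of the constant $3$ in the final bound.

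Next, I would repeat the conditioning trick from the proof of Theorem~\ref{thm:Stein1D}, treating each block $\Gamma_i$ separately. For $\alpha \in \Gamma_i$, set $\mathbf{U}_\alpha := \bW$ and let $\mathbf{V}_\alpha$ be the random vector whose $j$-th entry is $\sum_{\beta \in \Gamma_j \setminus \{\alpha\}} X'_{\alpha,\beta}$. The defining property of the monotone coupling yields $\EE[X_\alpha g(\bW)] = \EE[X_\alpha]\cdot \EE[g(\mathbf{V}_\alpha + e_i)]$, which combined with $\lambda_i = \sum_{\alpha \in \Gamma_i} \EE[X_\alpha]$ gives
$$\EE\big[\lambda_i g(\bW + e_i) - W_i g(\bW)\big] = \sum_{\alpha \in \Gamma_i} \EE[X_\alpha]\cdot \EE\big[g(\mathbf{U}_\alpha + e_i) - g(\mathbf{V}_\alpha + e_i)\big]\,.$$

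In the final step I would telescope along coordinate directions and use the multivariate Stein factor bound to conclude
$$\big|g(\mathbf{U}_\alpha + e_i) - g(\mathbf{V}_\alpha + e_i)\big| \leq 3 \sum_{j=1}^d |U_{\alpha,j} - V_{\alpha,j}|\,.$$
Decomposing $\Gamma \setminus \{\alpha\} = \Gamma_\alpha' \sqcup \Gamma_\alpha$, invoking the monotone coupling property $X'_{\alpha,\beta} \geq X_\beta$ for $\beta \in \Gamma_\alpha$, and using $\EE[X_\alpha]\,\EE[X'_{\alpha,\beta}] = \EE[X_\alpha X_\beta]$, the sum $\sum_j |U_{\alpha,j} - V_{\alpha,j}|$ splits exactly as in the univariate proof: the diagonal contribution of $X_\alpha$ produces $\Sigma_{1,i}$, the $\Gamma_\alpha'$-contribution produces $\Sigma_{2,i} + \Sigma_{3,i}$, and the $\Gamma_\alpha$-contribution produces $\Sigma_{4,i}$. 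Summing over $\alpha \in \Gamma_i$, then over $i$, and taking the supremum over all $g$ in the multivariate solution space delivers the claimed estimate. The main obstacle I anticipate is pinning down the multivariate Stein factor bound $\|\Delta_i g_h\|_\infty \leq 3$: unlike the univariate estimate in Lemma~\ref{lem:Solutions}, which follows from a direct computation with the one-dimensional Chen-Stein equation, its multivariate analogue typically requires a semigroup/generator argument built on $d$ independent Poisson processes, or else one imports the corresponding result from the monograph of Barbour, Holst and Janson.
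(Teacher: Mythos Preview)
Your proposal is sound and would yield the stated bound, but it follows a different route from the one the paper sketches. The paper does not set up a genuinely multivariate Chen--Stein equation; instead it telescopes by successively replacing the coordinates of $\bW$ by those of $\bZ$ one at a time, so that each step reduces to a one-dimensional comparison handled with the univariate Stein equation already used for Theorem~\ref{thm:Stein1D}. In that scheme the constant $3$ arises from the bound $\|g_h\|_\infty\leq 3\min\{1,1/\sqrt{\l}\}\leq 3$ of Lemma~\ref{lem:Solutions} (the sup-norm bound on the solution itself, not on its forward difference), since after freezing the remaining coordinates the effective one-dimensional test function is no longer an indicator but merely a $[0,1]$-valued function. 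Your generator approach is more conceptually unified and extends naturally to point-process settings, but it requires the multivariate Stein-factor estimate $\|\Delta_i g_h\|_\infty\leq 3$, which---as you correctly anticipate---is not a byproduct of the one-dimensional computation and must be imported from the semigroup construction in Barbour--Holst--Janson. The paper's replacement argument has the advantage of being entirely self-contained given the univariate bounds already established.
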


The proof of Theorem \ref{thm:SteinMulti} follows by suitably modifying that of Theorem \ref{thm:Stein1D} and for this reason we have decided not to include the details. The basic idea is to successively replace the components of the vector $\bW$ by those of $\bZ$, one after the other. In each step, the error is controlled similarly as in the proof of Theorem \ref{thm:Stein1D}, but this time using the bound $\|g_h\|\leq 3\min\{1,1/\sqrt{\l}\}\leq 3$ from Lemma \ref{lem:Solutions}, whence the factor $3$ in Theorem \ref{thm:SteinMulti}. This finally leads to the desired result, see \cite{ArratiaGoldsteinGordon,BarbourHolstJanson} for further information.

\section{The length spectrum}\label{sec:result}

\subsection{The main result}

In what follows we shall use the notation and the quantities defined in Section \ref{sec:GeometryOfGeodesics}. In \cite{Petri} it was proved that for a finite set
\begin{equation*}
W\subset \{L,R\}^* /\sim
\end{equation*}
the random variables $Z_{[w],N},[w]\in W$ defined at \eqref{eq:DefZwN} are asymptotically independently Poisson distributed with means $\lambda_{[w]}$, where
\begin{equation*}
\lambda_{[w]} = \frac{\card{[w]}}{2\len{w}}\,.
\end{equation*}
Here, $\len{w}$ denotes the number of letters in the word $w$ and $\card{[w]}$ the cardinality of the set $[w]$. This was proved using the classical method of moments.

In what follows we will provide an alternative and transparent approach as well as a sharpening of this result. That is, we will apply the Chen-Stein method described in Section \ref{sec:prob}, which will give us the approximation with error bounds. Before we state the theorem, we define the vector valued random variables we are interested in. Given $W\subset \{L,R\}^*/\sim$, we define the vector
\begin{equation*}
\bZ_{W,N} := (Z_{[w],N})_{[w]\in W}: \Omega_N\rightarrow\NN^W\,.
\end{equation*}
Furthermore, we define $\bZ_W$ to be a vector
\begin{equation*}
\bZ_W := (Z_{[w]})_{[w]\in W}\,,
\end{equation*}
where the coefficients $Z_{[w]}$ are independent random variables with law  $\cL(Z_{[w]})=\Po_{\lambda_{[w]}}$.

\begin{theorem}\label{thm:Main} Let $N\in\NN$ and $W\subset \{L,R\}^* /\sim$, put $m_W:=\max\set{\len{w}}{[w]\in W}$ and assume that $m_W\leq N$. Then
$$
d_{mTV}(\bZ_{W,N},\bZ_W) \leq 18\,\card{W}^3\,(6m_W)^{3m_W+4}\,{1\over N}\,.
$$
\end{theorem}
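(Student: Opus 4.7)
The plan is to realise each counting variable $Z_{[w_i],N}$ as a sum of Bernoulli indicators and then to apply the multivariate Chen--Stein bound of Theorem \ref{thm:SteinMulti}. Writing $W = \{[w_1], \ldots, [w_d]\}$ with $d = \card{W}$ and $\ell_i := \len{w_i}$, for each $i$ I would introduce an index set $\Gamma_i$ whose elements $\alpha$ encode ordered tuples of $\ell_i$ distinct labeled triangles (out of the $2N$ available) together with a starting side and the induced sequence of left/right turns, required to be such that the gluing pattern prescribed by these data closes up into a cycle of type $w_i$. Letting $X_\alpha$ be the indicator that $\omega$ actually contains the $\ell_i$ side-pairings read off from $\alpha$, a canonical choice of representative --- to absorb the cyclic and involutive symmetries underlying $\sim$ --- produces $Z_{[w_i],N} = \sum_{\alpha \in \Gamma_i} X_\alpha$.

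The second step is moment control via the configuration model. For any set of $\ell$ disjoint prescribed pairs among $\{1, \ldots, 6N\}$, the probability that a uniform perfect matching of $\{1, \ldots, 6N\}$ contains them equals
\[
\frac{(6N-2\ell)!\, 2^\ell (3N)!}{(6N)!\,(3N-\ell)!},
\]
which for $\ell \leq m_W \leq N$ equals $(6N)^{-\ell}(1+O(\ell^2/N))$ and is in any case at most $2(6N)^{-\ell}$. Combined with the crude count $\card{\Gamma_i} \leq 6 \cdot (2N)^{\ell_i}$, this yields the uniform bounds $\EE_N[X_\alpha] \leq 2(6N)^{-\ell_i}$ and, by the calculation already carried out in \cite{Petri}, $\sum_{\alpha\in\Gamma_i} \EE_N[X_\alpha] = \lambda_{[w_i]} \leq 1$, feeding directly into the estimates of $\Sigma_{1,i}$, $\Sigma_{2,i}$, $\Sigma_{3,i}$.

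For the dependency structure, I would declare $\beta \in \Gamma'_\alpha$ exactly when the configurations $\alpha$ and $\beta$ share at least one triangle, so that their required side-pairings may overlap. Since each triangle of $\{1, \ldots, 2N\}$ lies in at most $6\ell_j \cdot 2^{\ell_j - 1}\cdot(2N)^{\ell_j - 1}$ elements of $\Gamma_j$, one obtains an explicit bound on $\card{\Gamma'_\alpha}$ which, combined with the moment estimates above, controls $\Sigma_{2,i}$ and $\Sigma_{3,i}$. For the monotone coupling underlying $\Sigma_{4,i}$, I would exploit the fact that conditional on $\{X_\alpha = 1\}$ the remaining $6N - 2\ell_i$ sides carry a uniform perfect matching; for $\beta \in \Gamma_\alpha$, whose triangles are by definition disjoint from those of $\alpha$, the indicator $X'_{\alpha,\beta} := \mathbf{1}\{\text{the pairings required by }\beta\text{ are present in this conditional matching}\}$ satisfies $X'_{\alpha,\beta} \geq X_\beta$ by a standard monotonicity property of uniform perfect matchings, while the excess $\EE_N[X'_{\alpha,\beta}] - \EE_N[X_\beta]$ equals the difference of the two configuration-model probabilities above evaluated at $6N$ and $6N - 2\ell_i$, which is of order $\ell_i/N$ times $\EE_N[X_\beta]$.

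Assembling the contributions, each of $\Sigma_{1,i},\Sigma_{2,i},\Sigma_{3,i},\Sigma_{4,i}$ admits a bound of the form $\card{W}^2 \cdot (6m_W)^{3m_W + c}/N$ for some modest constant $c$, and summing over $i \in \{1, \ldots, d\}$ together with the factor $3$ from Theorem \ref{thm:SteinMulti} yields the prefactor $18$ in the announced bound. The hardest part will be the combinatorial bookkeeping: keeping precise track of the exponents of $m_W$ and $N$ that arise from $\card{\Gamma_i}$, $\card{\Gamma'_\alpha}$, and the coupling defect, and verifying that the sum over all weakly-dependent pairs $(\alpha, \beta)$ in $\Sigma_{4,i}$ really leaves behind only a factor $1/N$ (rather than a larger power that would destroy the bound). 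A secondary but not entirely obvious technical point is the verification of the monotonicity underlying the coupling, since the uniform perfect matching is not a product measure.
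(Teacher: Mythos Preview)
Your overall strategy matches the paper's: decompose each $Z_{[w],N}$ into Bernoulli indicators attached to labelled cycle configurations, declare two configurations ``strongly dependent'' when they share labels, build a monotone coupling, and bound the four Chen--Stein sums. The paper carries this out in exactly this way, with one difference in bookkeeping: rather than picking a canonical representative, it sums over all labelled cycles giving the word $w$ and then corrects by the factor $\card{[w]}/(2\len{w})$; this is purely cosmetic. The coupling is also built differently --- the paper defines an explicit ``insertion'' map $\omega\mapsto\omega_\alpha'$ that forces the pairs of $\alpha$ into $\omega$, rather than invoking the conditional matching --- but both constructions verify the required monotonicity, so this is a variation rather than a divergence.

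There is, however, a genuine gap in your setup. You define $\Gamma_i$ as ordered tuples of $\ell_i$ \emph{distinct} labelled triangles. But a closed geodesic of combinatorial length $\ell_i$ on $S_O(\omega)$ can pass through the same triangle more than once, so the identity $Z_{[w_i],N}=\sum_{\alpha\in\Gamma_i}X_\alpha$ fails as written: you undercount precisely those geodesics that revisit a triangle. The paper handles this by allowing repeated triangles in the index set from the outset (its $\mathcal{P}_k$ carries no distinctness hypothesis) and then splitting $\Gamma_w=\Gamma_{w,d}\sqcup\Gamma_{w,nd}$ into distinct and non-distinct parts for the moment computations; the non-distinct part contributes terms of the form $3^i(\len{w}-i)^{\len{w}}a_{\len{w}-i,N}\,p_{\len{w}-i,N}^2$ to $\Sigma_{1,w}$ and analogously to the other sums, and these are what generate much of the $(6m_W)^{3m_W+4}$ in the final bound. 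Your crude counts $\card{\Gamma_i}\le 6(2N)^{\ell_i}$ and $\EE_N[X_\alpha]\le 2(6N)^{-\ell_i}$ are correct for the distinct part but silent on the rest. You can repair this either by enlarging $\Gamma_i$ to include non-distinct tuples (as the paper does) or by bounding $\PP_N[Z_{[w_i],N}\neq \sum_{\alpha\in\Gamma_i}X_\alpha]$ separately via $\EE_N[\text{non-distinct count}]=O((3m_W)^{m_W}/N)$ and absorbing it into the total variation error; either way this is additional combinatorics you have not yet accounted for.
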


\begin{remark}\rm  
\begin{itemize}
\item[(a)] If we are interested in $1$-dimensional Poisson approximation only, that is, in a bound on $d_{TV}(Z_{[w],N},Z_{[w]})$ for some fixed $[w]\in\{L,R\}^*/\sim$, we can slightly improve the constant in Theorem \ref{thm:Main}. In fact, using Theorem \ref{thm:Stein1D} instead of Theorem \ref{thm:SteinMulti} in the proof given below one readily sees that the factor $18$ can then be replaced by the factor $6$.
\item[(b)] Note that Lemma \ref{lem:brooks} and Proposition \ref{prp:cusplength} imply that up to an error of the order $\mathcal{O}(N^{-1})$ the length spectrum of the compactified random surfaces is also determined by the random variables $Z_{W,N}$. 
\end{itemize}
\end{remark}

Before we prove our theorem in Section \ref{subsec:Proofs}, we turn to some consequences. The first one is a Poisson limit theorem, that, in particular, provides a concrete condition under which the bound on the multivariate total variation distance in Theorem \ref{thm:Main} tends to zero. As explained in the previous section, such a quantitative Poisson limit theorem allows one to approximate individual probabilities like by those of a vector of independent Poisson random variables with an explicit upper bound on the error.

\begin{corollary}\label{cor1}
Suppose that $(W_N)_{N\in\NN}$ is a sequence with $W_N\subset\{L,R\}^{*}/\sim$ for any $N\in\NN$ that satisfies $m_{W_N}\leq N$ for all $N\in\NN$ and
$$
\lim_{N\to\infty}\card{W_N}^3\,(6m_{W_N})^{3m_{W_N}+4}\,{1\over N}=0\,.
$$
Then one has the Poisson limit theorem
$$
\lim_{N\to\infty}d_{mTV}\big(\bZ_{{W_N},N},\bZ_{W_N}\big)=0\,.
$$
\end{corollary}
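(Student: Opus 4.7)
The plan is essentially to apply Theorem \ref{thm:Main} directly to the sequence $(W_N)_{N\in\NN}$. Because the hypothesis of the corollary imposes the side condition $m_{W_N}\le N$ for every $N$, Theorem \ref{thm:Main} is applicable with the choice $W=W_N$, giving
\[
d_{mTV}\bigl(\bZ_{W_N,N},\bZ_{W_N}\bigr)\;\le\;18\,\card{W_N}^{3}\,(6m_{W_N})^{3m_{W_N}+4}\,\frac{1}{N}
\]
for all $N\in\NN$. The right-hand side is exactly (up to the fixed constant $18$) the quantity that the corollary assumes to vanish in the limit $N\to\infty$, so letting $N\to\infty$ in both sides and using the monotonicity of limits on $[0,\infty)$ yields the asserted conclusion.

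Strictly speaking there is no substantive obstacle here: the corollary is an immediate packaging of Theorem \ref{thm:Main} into a qualitative Poisson limit statement, and all the analytic work has already been absorbed into the explicit error bound supplied by that theorem. The only point worth checking before invoking Theorem \ref{thm:Main} is that $m_{W_N}\le N$ holds for every $N$, which is part of the standing hypothesis; if one wanted to relax this assumption, one could observe that whenever the bracketed expression tends to zero, $m_{W_N}/N\to 0$ automatically, so $m_{W_N}\le N$ is satisfied for all sufficiently large $N$ and the Poisson limit theorem still applies after discarding finitely many indices. Consequently the proof reduces to writing down the bound from Theorem \ref{thm:Main}, invoking the hypothesis, and passing to the limit.
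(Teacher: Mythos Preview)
Your proposal is correct and matches the paper's treatment: Corollary \ref{cor1} is stated there without a separate proof, being an immediate consequence of Theorem \ref{thm:Main} obtained exactly by substituting $W=W_N$ into the explicit bound and letting $N\to\infty$. Your additional remark about $m_{W_N}\le N$ being automatic for large $N$ under the limit hypothesis is a harmless embellishment.
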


Our second consequence is the following geometric implication of Theorem \ref{thm:Main}, whose proof is postponed until Section \ref{subsec:Proofs}.

\begin{corollary}\label{cor2}
The Poisson limit theorem holds for all curves on the surface with hyperbolic length up to $o(\log\log N)$.
\end{corollary}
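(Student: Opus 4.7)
The plan is to apply Corollary \ref{cor1} to the family $W_N$ consisting of all equivalence classes $[w]\in\{L,R\}^*/\!\sim$ whose associated closed geodesic has hyperbolic length at most $T_N$, for an arbitrary prescribed sequence $T_N=o(\log\log N)$. Thanks to the remark following Theorem \ref{thm:Main}, which combines Lemma \ref{lem:brooks} with Proposition \ref{prp:cusplength}, working with the length spectrum of $S_O(\omega)$ rather than that of $S_C(\omega)$ costs only an error of order $\mathcal{O}(N^{-1})$ and is absorbed. It therefore suffices to verify
\[
\card{W_N}^3\,(6m_{W_N})^{3m_{W_N}+4}\,\tfrac{1}{N}\;\longrightarrow\;0.
\]

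First I would convert the hyperbolic-length threshold into a combinatorial bound on $\len{w}$. Formula \eqref{eq:LengthGamma} yields $\abs{\tr{w}}\leq 2\cosh(T_N/2)\leq 2\,e^{T_N/2}$. Conversely, if $[w]$ contributes a closed geodesic then the matrix $w$ is hyperbolic, which forces $w$ to contain both letters $L$ and $R$; up to cyclic permutation we may write $w=L^{a_1}R^{b_1}\cdots L^{a_n}R^{b_n}$ with all $a_i,b_i\geq 1$ and $\sum_i(a_i+b_i)=\len{w}$. A short induction on $n$ (alternatively, the classical continued-fraction dictionary for products of $L$ and $R$) yields the sharp minimum-trace estimate
\[
\abs{\tr{w}}\;\geq\;\len{w}+1,
\]
the minimum being realised by $w=L^{\len{w}-1}R$, which has trace $\len{w}+1$. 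Combining the two inequalities gives $m_{W_N}\leq 2\cosh(T_N/2)-1\leq 2\,e^{T_N/2}$. A crude enumeration of binary strings of length at most $m_{W_N}$ then produces $\card{W_N}\leq 2^{m_{W_N}+1}$.

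The final step is routine asymptotic book-keeping. Taking logarithms of the target quantity,
\[
\log\!\bigl(\card{W_N}^3\,(6m_{W_N})^{3m_{W_N}+4}\bigr)\;\leq\;3(m_{W_N}+1)\log 2+(3m_{W_N}+4)\log(6m_{W_N})\;=\;O(m_{W_N}\log m_{W_N}).
\]
Substituting $m_{W_N}\leq 2\,e^{T_N/2}$ this becomes $O(T_N\,e^{T_N/2})$. Under the hypothesis $T_N=o(\log\log N)$ we have $e^{T_N/2}=(\log N)^{o(1)}$, hence $T_N\,e^{T_N/2}=o(\log N)$. Therefore $\card{W_N}^3(6m_{W_N})^{3m_{W_N}+4}=o(N)$ and Corollary \ref{cor1} delivers the claimed Poisson limit.

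The main technical point is the minimum-trace bound $\abs{\tr{w}}\geq\len{w}+1$; the rest is elementary calculus. I should note, however, that this estimate does not need to be delicate: even a weaker lower bound of the shape $\abs{\tr{w}}\geq\len{w}^{\varepsilon}$ would still force $m_{W_N}=(\log N)^{o(1)}$ and suffice for the conclusion, so any standard linear-in-$\len{w}$ lower bound will do.
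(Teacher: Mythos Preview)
Your argument is correct and follows the same overall route as the paper's: translate the hyperbolic-length cutoff into a trace bound, convert that into a bound on the maximal word length via $\abs{\tr{w}}\geq\len{w}+1$ (which the paper records as $m_{W(k)}=k-1$), and then check the hypothesis of Corollary~\ref{cor1}. The one substantive difference is in how $\card{W_N}$ is handled: the paper invokes the polynomial estimate $\card{W(k)}\leq c_\varepsilon k^{2+\varepsilon}$ from \cite[Proposition~3.4]{PetriWalker}, whereas you use the crude bound $\card{W_N}\leq 2^{m_{W_N}+1}$. As you rightly note, this costs nothing, since $\card{W_N}^3$ is dominated by the factor $(6m_{W_N})^{3m_{W_N}+4}$ in the error estimate; your version is therefore a little more self-contained, at the price of a less sharp intermediate constant.
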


From the fact that the genus $g$ of our surfaces is bounded from above linearly in $N$, more precisely by ${N+1\over 2}$, it follows that Corollary \ref{cor2} can be rephrased as follows.

\begin{corollary}\label{cor3}
The Poisson limit theorem holds for all curves on the surface with hyperbolic length up to $o(\log\log g)$.
\end{corollary}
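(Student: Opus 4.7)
The plan is to reduce Corollary \ref{cor3} to Corollary \ref{cor2} by exploiting the deterministic relationship between the genus $g$ of the compactified surface $S_C(\omega)$ and the combinatorial parameter $N$. The paper has already established (via Euler characteristic, as noted in the remark just preceding the statement of the corollary) the deterministic upper bound
\[
g \;\leq\; \frac{N+1}{2}\,,
\]
which holds for every $\omega \in \Omega_N$. This is the only input needed beyond Corollary \ref{cor2}.

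Concretely, I would argue as follows. Suppose $(\ell_N)_{N\in\NN}$ is a sequence of hyperbolic length cutoffs with $\ell_N = o(\log\log g)$, interpreted via the deterministic upper bound on $g$. Then from $g \leq (N+1)/2$ we obtain $\log\log g \leq \log\log\bigl((N+1)/2\bigr)$ for all sufficiently large $N$ (where $g \geq e$), and since
\[
\log\log\bigl((N+1)/2\bigr) \;=\; \log\log N + o(1)\,,
\]
the assumption $\ell_N = o(\log\log g)$ immediately gives $\ell_N = o(\log\log N)$. Now let $W_N \subset \{L,R\}^{\ast}/\!\sim$ be the collection of equivalence classes of words whose corresponding closed geodesics on $S_O(\omega)$ have hyperbolic length at most $\ell_N$. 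By Corollary \ref{cor2}, applied to this sequence $(W_N)_{N\in\NN}$, one has
\[
\lim_{N\to\infty} d_{mTV}\bigl(\bZ_{W_N,N},\,\bZ_{W_N}\bigr) \;=\; 0\,,
\]
which is exactly the Poisson limit theorem for curves of hyperbolic length up to $o(\log\log g)$.

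There is no real technical obstacle here; the statement is essentially a rephrasing of Corollary \ref{cor2} that takes advantage of the linear upper bound relating $g$ and $N$. The only point that deserves a line of care is the interpretation of ``$o(\log\log g)$'' when $g$ is itself a random variable: the cleanest reading is to use the deterministic upper bound $g \leq (N+1)/2$, so that any growth condition expressed in terms of $\log\log g$ is automatically inherited by $\log\log N$. One could alternatively combine with Gamburd's theorem (quoted earlier as $\EE_N[g] \sim N/2$) to argue that $g$ is in fact linear in $N$ with high probability, but this is not necessary: the deterministic inequality alone suffices to transfer the $\log\log N$ regime to the $\log\log g$ regime, completing the proof.
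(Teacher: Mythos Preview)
Your proposal is correct and follows exactly the paper's own reasoning: the paper does not give a separate proof of Corollary~\ref{cor3} but simply observes, in the sentence preceding its statement, that the deterministic bound $g\leq (N+1)/2$ allows Corollary~\ref{cor2} to be rephrased in terms of $g$. Your write-up is in fact more careful than the paper's one-line justification, particularly in flagging and resolving the interpretation of $o(\log\log g)$ when $g$ is random.
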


Finally, we remark that because of Lemma \ref{lem:brooks} and Proposition \ref{prp:cusplength}, Corollary \ref{cor2} holds for both the surface with cusps as well as the compactified surface.

\subsection{Proofs}\label{subsec:Proofs}

\begin{proof}[Proof of Theorem \ref{thm:Main}] We first need to set up some more notation. Instead of directly counting occurrences of the words in $W$, we will count occurrences of labelled versions of the words. For $k\in\NN$ let $\mathcal{P}_k $ denote the set of sequences of $k$ ordered pairs of labels in $\{1,\ldots, 6N\}$ such that every pair of labels belongs to the same triangle. Given such a sequence $\alpha\in\mathcal{P}_k$, it naturally gives rise to a word $\mathrm{wrd}(\alpha)$. Every ordered pair of labels in $\alpha$ corresponds to an ordered pair of sides and hence gives us an $L$ or an $R$. Concatenating these letters gives $\mathrm{wrd}(\alpha)$. 

We can also speak of whether or not $\alpha\in\mathcal{P}_k$ can be found in an element $\omega\in\Omega_N$. If this is the case then we will write $\alpha\subset\omega$. Note that $\alpha\subset \omega$ does not imply that the pairs of labels in $\alpha$ form pairs of labels in $\omega$ but rather that the last and first label of every consecutive two pairs in $\alpha$ form a pair in $\omega$.

Now define the sets
\begin{equation*}
\Gamma_{w} := \set{\alpha \in \mathcal{P}_k}{\mathrm{wrd}(\alpha)=w}\quad\text{and}\quad \Gamma = \bigcup_{w\in W}\Gamma_w
\end{equation*}
which will take over the role of the sets $\Gamma_i$ of Section \ref{sec:prob}. For $\alpha\in\Gamma_w$ we define the random variables
\begin{equation*}
Z_{\alpha,N}:\Omega_N\to \{0,1\}
\end{equation*}
by putting
\begin{equation*}
Z_{\alpha,N}(\omega) := \left\{\begin{array}{ll}1 &: \text{if }\alpha\subset\omega \\ 0 &: \text{otherwise}\,. \end{array}\right.
\end{equation*}
Then, we have the representation
\begin{equation}\label{eq:RepresentationZWN}
Z_{[w],N} = \frac{\card{[w]}}{2\len{w}}\sum_{\alpha\in \Gamma_{w}} Z_{\alpha,N}\,.
\end{equation}
The factor $\frac{\card{[w]}}{2\len{w}}$ up front comes from the following consideration. We should consider cyclic permutations of a sequence of pairs $\alpha$ and the sequence read backwards equivalent, because if $\alpha$ and $\alpha'$ differ by these operations then $Z_{\alpha',N}=Z_{\alpha,N}$, as explained in Section \ref{sec:GeometryOfGeodesics}. This is where the factor $\frac{1}{2\len{w}}$ comes from. On the other hand, we do need to consider all the words equivalent to $w$, which is the reason for the factor $\card{[w]}$.

Since we would like to apply the results from Section \ref{sec:prob}, we need to find a decomposition of our set $\Gamma$ and random variables $Z_{\alpha,\beta,N}':\Omega_N\to \{0,1\}$ for every $\alpha\in\Gamma$. Given $\alpha\in\Gamma_{w}$, we define the sets
\begin{equation*}
\Gamma_{\alpha}':=\set{\beta\in\Gamma\setminus\{\alpha\} }{\alpha \text{ and } \beta \text{ have labels in common}}\quad\text{and}\quad\Gamma_\alpha = \Gamma\setminus (\Gamma_\alpha'\cup\{\alpha\})\,.
\end{equation*}
In order to define the random variables $X_{\alpha,\beta}'$, we first define maps $\Omega_N\to\Omega_N$ by $\omega\mapsto \omega_\alpha'$ for every $\alpha\in\Gamma$. Here, $\omega_\alpha'$ is obtained from $\omega$ by ``inserting'' $\alpha$. That is, if $(a_1,a_2)$ is a pair of labels in $\alpha$ and $\omega$ contains the pairs $\{a_1,x\}$ and $\{a_2,y\}$, we remove these pairs from $\omega$ and replace them with $\{a_1,a_2\}$ and $\{x,y\}$. We do this until all pairs from $\alpha$ appear in $\omega$. It is not hard to check that given $\alpha\in\Gamma$, $\omega\mapsto \omega'_\alpha$ is a well-defined constant-to-one map. Now define $X_{\alpha,\beta}':\Omega_N\to\{0,1\}$ by
\[Z_{\alpha,\beta,N}'(\omega) = Z_{\beta,N}(\omega_\alpha') \]
for all $\alpha,\beta\in\Gamma$ and $\omega\in\Omega_N$. Because the map $\omega\mapsto\omega_\alpha'$ is constant-to-one, we have
\[\PP[Z_{\alpha,\beta,N}' = 1] = \PP[Z_{\beta,N}=1 | Z_{\alpha,N} =1].\]
Furthermore, if $\beta$ and $\alpha$ have no labels in common and $\omega$ contains $\beta$, then $\omega_\alpha'$ also contains $\beta$. As such we have that
\[Z_{\alpha,\beta,N}'(\omega) \geq Z_{\beta,N}(\omega) \]
for all $\omega\in\Omega_N$ and all $\beta\in\Gamma_\alpha$. Again we note that it is only the existence of these variables that matters, we will not use them in what follows.

In order to apply Theorem \ref{thm:SteinMulti} let us define the following quantities:
\begin{align*}
\Sigma_{1,w} & := \sum_{\alpha\in\Gamma_{w}}\EE_N[Z_{\alpha,N}]^2\,, & \Sigma_{2,w} & :=\sum_{\alpha\in\Gamma_{w}}\sum_{\beta\in\Gamma_\alpha'}\big(\EE_N[Z_{\alpha,N}]\big)\big(\EE_N[Z_{\beta,N}]\big)
\\
\Sigma_{3,w} & :=\sum_{\alpha\in\Gamma_{w}}\sum_{\beta\in\Gamma_{\alpha}'}\EE_N[Z_{\alpha,N}Z_{\beta,N}]
\end{align*}
and
$$
\Sigma_{4,w}  :=\sum_{\alpha\in\Gamma_{w}}\sum_{\beta\in\Gamma_{\alpha}}\EE_N[Z_{\alpha,N}Z_{\beta,N}]-\big(\EE_N[Z_{\alpha,N}]\big)\big(\EE_N[Z_{\beta,N}]\big)\,.
$$
Note however that because $Z_{[w],N}$ is not just a sum of the $Z_{\alpha,N}$'s, we will need to modify the quantities above slightly to get the actual upper bound on $d_{mTV}$. Nevertheless most of the proof now consists of bounding these three quantities.

We start with $\Sigma_{1,w}$. It will be convenient to decompose the set $\Gamma_{w}$ as the disjoint union
\begin{equation*}
\Gamma_{w}=\Gamma_{w,d}\sqcup\Gamma_{w,nd}\,,
\end{equation*}
where $\Gamma_{w,d}$ is the set of $\alpha\in\Gamma_{w}$ such that all the pairs of labels in $\alpha$ correspond to distinct triangles and $\Gamma_{w,nd}=\Gamma_{w}\setminus\Gamma_{w,d}$.
We will now split $\Sigma_{1,w}$ up according to this decomposition. To shorten notation, we introduce for all $k,N\in\NN$ the following two symbols:
\begin{equation*}
p_{k,N} := \frac{1}{(6N-1) \cdot (6N-3) \cdots (6N-2k+1)}
\end{equation*}
and
\begin{equation*}
a_{k,N} := 3^k\cdot 2N(2N-1)\cdots (2N-k+1)\,.
\end{equation*}
Note that $p_{k,N}$ is the probability that $k$ specific pairs of labels appear in $\omega\in\Omega_N$, while $a_{k,N}$ is the number of sequences of labels that can be obtained by gluing $k$ distinct labelled triangles together in a cycle with a predefined orientation and recording the labels of the identified pairs of sides.

We have
\begin{equation}\label{eq:Sig11}
\sum_{\alpha\in\Gamma_{w,d}}\EE_N[Z_{\alpha,N}]^2 = \sum_{\alpha\in\Gamma_{w,d}} \PP_N[\alpha\subset\omega]^2 = a_{\len{w},N}\;p_{\len{w},N}^2\,,
\end{equation}
which in turn follows from the fact that
\begin{equation*}
\PP_N[\alpha\subset\omega] = p_{\len{w},N} \quad\text{for all}\quad\alpha\in\Gamma_{[w],nd}\quad\text{and}\quad\card{\Gamma_{w,d}} =  a_{\len{w},N}\,.
\end{equation*}
For the other part of the sum in $\Sigma_{1,w}$ we have that
\begin{equation}\label{eq:Sig12}
\sum_{\alpha\in\Gamma_{w,nd}} \EE_N[Z_{\alpha,N}]^2 \leq \sum_{i=1}^{\len{w}-2} 3^i\;(\len{w}-i)^{\len{w}} a_{\len{w}-i,N}\;p_{\len{w}-i,N}^2\,,
\end{equation}
where the number $\len{w}-i$ in the sum should be seen as the number of distinct triangles that are used for the labels (which should be at least $2$, whence the upper bound $|w|-2$ in the above summation). Moreover, $a_{\len{w}-i,N}$ counts the number of ways to pick $\len{w}-i$ triangles and $\len{w}-i$ turns on them, while $(\len{w}-i)^{\len{w}}$ gives an upper bound for the number of ways to use the chosen triangles to form cycles corresponding to $[w]$ (we have to choose $1$ triangle per letter in $w$ and have at most $\len{w}-i$ choices for each letter). The factor $3^i$ bounds the number of ways to choose the $i$ further turns that hadn't been chosen yet.

Adding the terms \eqref{eq:Sig11} and \eqref{eq:Sig12}, we obtain
\begin{equation*}
\Sigma_{1,w} \leq a_{\len{w},N}\;p_{\len{w},N}^2+ \sum_{i=1}^{\len{w}-2} 3^i\;(\len{w}-i)^{\len{w}} a_{\len{w}-i,N}\;p_{\len{w}-i,N}^2\,.
\end{equation*}

For $\Sigma_{2,w}$ we have
\begin{equation*}
\begin{split}
\Sigma_{2,w} \leq \sum_{\substack{w'\text{ s.t. }\\ [w']\in W}}\sum_{i=1}^{2\len{w}} \sum_{j=0}^{\len{w}} \sum_{k=0}^{\len{w'}} \binom{2\len{w}}{i} \, & 3^{i+j+k} (\len{w}-j)^{\len{w}}(\len{w'}-k)^{\len{w'}} \\
& \times a_{\len{w}+\len{w'}-i-j-k,N} \; p_{\len{w},N} \; p_{\len{w'},N}\,.
\end{split}
\end{equation*}
Indeed, $i$ counts how many of these labels $\alpha$ and $\beta\in\Gamma_\alpha'$ share and $j$ and $k$ count the number of repetitions in $\alpha$ and $\beta$ themselves. The binomial coefficient counts the number of ways to choose which of the labels are doubled between $w$ and $w'$. Applying a reasoning similar to the one we have used for $\Sigma_{1,w}$ gives the upper bound. 

For $\Sigma_{3,w}$ we obtain
\begin{equation*}
\begin{split}
\Sigma_{3,w} \leq \sum_{\substack{w'\text{ s.t. }\\ [w']\in W}}\sum_{i=1}^{\len{w}} \sum_{j=0}^{\len{w}} \sum_{k=0}^{\len{w'}} \binom{\len{w}}{i}\, & 3^{i+j+k} (\len{w}-j)^{\len{w}}(\len{w'}-k)^{\len{w'}} \\
& \times a_{\len{w}+\len{w'}-i-j-k-1,N} \; p_{\len{w}+\len{w'}-i,N} \,.
\end{split}
\end{equation*}
This is based on the following observation. The only way for $Z_{\alpha,N}$ and $Z_{\beta,N}$ to concurrently be equal to $1$ is when the pairs in $\beta$ do not violate those in $\alpha$. This means that given $\alpha$, the $\beta$'s that add a non-zero contribution to $\Sigma_{3,w}$ together with $\alpha$ can be formed by picking some number of pairs where they intersect. The index $i$ counts how many pairs $\alpha$ and $\beta$ intersect in and the binomial coefficient counts the number of ways to pick these pairs in $\alpha$. The extra `$-1$' in the index of $a$ comes from the fact that if two circuits intersect in $i$ pairs, they intersect in at least $i+1$ vertices.

Now, we will use our assumptions on $m_W$ to uniformly bound the three terms $\Sigma_{1,w}$, $\Sigma_{2,w}$ and $\Sigma_{3,w}$. The other crucial ingredient in this bound will be that for all $k,l,N\in\NN$ such that $k\leq l\leq N$ we have
\begin{equation}\label{eq:Bd1}
a_{k,N}p_{l,N} \leq \left(1+\frac{1}{6N-1}\right) N^{k-l} \leq \frac{6}{5} N^{k-l}\,.
\end{equation}
This is proved as follows. The expressions for $a_{k,N}$ and $p_{l,N}$ contain $k$ and $l$ factors with an $N$ in them respectively. The factors in $p_{l,N}$ are roughly $3$ times those in $a_{k,N}$. Comparing these factors one by one, one sees that the first factor in $a_{k,N}$ is slightly more than $1/3$ of the corresponding factor in $p_{l,N}$, hence the $(1+\frac{1}{6N-1})$. All the other factors in a are less than $1/3$ of the corresponding one in $p_{l,N}$. So in particular, if $l \geq k$, they kill off the $3^k$ in $a_{k,N}$.  The remaining factors in $p_{l,N}$ make up the $N^{k-l}$ (we silently assume here that the factors in $p_{l,N}$ are all more than $N$, which comes down to $l<5N/2$, which in all the applications later in the text is satisfied because of the bound on $m_W$). Similarly, one verifies that
\begin{equation}\label{eq:Bd2}
a_{k,N}\,p_{l,N}\,p_{m,N} \leq {6\over 5}\,N^{k-l-m}
\end{equation}
for all $k,l,m\leq N$.

We define $c_W:=\max\set{\card{[w]}}{[w]\in W}$ and compute, using \eqref{eq:Bd1} and \eqref{eq:Bd2}, 
\begin{align*}
\Sigma_{1,w} &\leq {6\over 5}{1\over N}+\sum_{i=1}^{\len{w}-2}3^i(\len{w}-i)^{\len{w}}\,{6\over 5}{1\over N^{\len{w}-i}}\\
&\leq {6\over 5}{1\over N}\big(1+(3m_W)^{m_W+1}\big)\,,
\end{align*}
and, similarly,
\begin{align*}
\Sigma_{2,w} \leq {6\over 5}\card{W}c_W(6m_W)^{3m_W+3}{1\over N}\qquad\text{and}\qquad\Sigma_{3,w}\leq {6\over 5}\card{W}c_W(3m_W)^{3m_W+3}{1\over N}\,.
\end{align*}

Finally, for $\Sigma_{4,w}$, we note that a direct computation gives that
\begin{eqnarray*}
p_{k+l,N} - p_{k,N}\cdot p_{l,N} & \leq & p_{k,N}\cdot p_{l,N}\cdot \left(\left(\frac{6N-1}{6N-2(k+l)-1}\right)^{k+l}-1\right) \\[3mm]
& \leq &  \frac{2(k+l)^2}{6N-2(k+l)+1} \, p_{k,N}\cdot p_{l,N}\,.
\end{eqnarray*}
Using our assumption on $m_W$ and a similar bound to the bound on $\Sigma_{1,w}$, we obtain that
\begin{eqnarray*}
\Sigma_{4,w} & \leq & \frac{m_W^2}{N}\left(\sum_{w\in W} a_{\abs{w},N}p_{\abs{w},N}+ \sum_{i=1}^{\len{w}-2} 3^i\;(\len{w}-i)^{\len{w}} a_{\len{w}-i,N}\;p_{\len{w}-i,N}\right)^2 \\
& \leq & \frac{36 (\card{W}\, m_W+(3m_W)^{m_W})^2}{25N},
\end{eqnarray*}
by \eqref{eq:Bd2}.

To finish the proof, we remind the reader of the fact that we still need to modify the quantities above because of the multiplicative factor in front of the variables $Z_{[w],N}$ in terms of the $Z_{\alpha,N}$, recall \eqref{eq:RepresentationZWN}. This modification consists of two things. For every word $w$, we have overcounted: if $\alpha'$ is a cyclic permutation of $\alpha$ or $\alpha$ read backwards, then $Z_{\alpha',N}=Z_{\alpha,N}$, so these `doubles' can be thrown out of the sums above. On the other hand, given a class of words $[w]\in W$, we need to add the sums over all representatives of $[w]$. This procedure gives rise to four quantities $\Sigma_{1,[w]}$, $\Sigma_{2,[w]}$, $\Sigma_{3,[w]}$ and $\Sigma_{4,[w]}$, which form the actual upper bounds for $d_{mTV}$. Formally, we have
\begin{equation*}
\Sigma_{1,[w]} = \frac{\card{[w]}}{2\len{w}}\Sigma_{1,w}\,,\qquad\Sigma_{2,[w]}\leq\left(\frac{\card{[w]}}{2\len{w}}\right)^2\Sigma_{2,w}\,,\qquad\Sigma_{3,[w]}\leq \left(\frac{\card{[w]}}{2\len{w}}\right)^2\Sigma_{3,w}
\end{equation*}
and
$$
\Sigma_{4,[w]}\leq \left(\frac{\card{[w]}}{2\len{w}}\right)^2\Sigma_{4,w}
$$
for all $[w]\in W$. Theorem \ref{thm:SteinMulti} now tells us that
\begin{equation*}
d_{mTV}(\bZ_{W,N},\bZ_W) \leq 3\sum_{[w]\in W}\big(\Sigma_{1,[w]}+\Sigma_{2,[w]} + \Sigma_{3,[w]}+\Sigma_{4,[w]}\big) \leq 3\,\frac{6\card{W}^3(6m_W)^{3m_W+4}}{N}\,,
\end{equation*}
where we have used ${6\over 5}+{6\over 5}+{6\over 5}+{36\over 25}\leq 6$ and the fact that $c_W\leq 2m_W$. Indeed, all elements in the class of word $w$ are obtained by cylclically permuting the letters of $w$ or by reversing the word, which means that one has at most $2\len{w}$ elements in $[w]$. This completes the proof.
\end{proof}

\begin{proof}[Proof of Corollary \ref{cor2}]
Let us introduce for $k\in\NN$ the set $W(k)$ of all (classes of) words that have trace less than or equal to $k$.
Then, it follows from \cite[Proposition 3.4]{PetriWalker} that $\card{W(k)}\leq c_\varepsilon k^{2+\varepsilon}$ for any $\varepsilon>0$ and a constant $c_\varepsilon \in(0,\infty)$. Moreover, in this situation one has that $m_{W(k)}=k-1$. Thus, Corollary \ref{cor1} delivers the bound
\begin{equation}\label{eq:UB}
18\card{W(k)}^3\,(6m_{W(k)})^{3m_{W(k)}+4}\,{1\over N} \leq c_\varepsilon'\,{(6k)^{3k+5+3\varepsilon}\over N}
\end{equation}
on the multivariate total variation distance for some constant $c_\varepsilon'\in(0,\infty)$ that only depends on $\varepsilon$.

Now, let $\phi_\varepsilon(x)$ be the inverse of the function $x\mapsto c_\varepsilon'(6x)^{3x+5+3\varepsilon}$, $x\in(0,\infty)$. It is easily verified that $\phi_\varepsilon$ satisfies the estimate
$$
(\log x)^\alpha\leq \phi_\varepsilon(x)\leq\log x
$$
for all $x\geq 1$ and $\alpha\in(0,1)$. In particular, if $k=o((\log N)^\alpha)$, then the upper bound \eqref{eq:UB} on the multivariate total variation distance tends to zero and hence the Poisson limit theorem holds. This can be rephrased by saying that the length $\ell(\gamma)$ of a corresponding curve $\gamma$ has to satisfy
$$
\ell(\gamma) =o\big(\cosh^{-1}\big({(\log N)^\alpha}\big)\big) = o(\log\log N)\,,
$$
since $\cosh^{-1}$ behaves roughly like a logarithm for large arguments.
\end{proof}


\end{document}